\numberwithin{equation}{section}
\def\R{\mathbb{R}}
\def\N{\mathbb{N}}
\newtheorem{definition}{Definition}[section]
\newtheorem{theorem}{Theorem}[section]
\newtheorem{Proposition}{Proposition}[section]
\newtheorem{lemma}[theorem]{Lemma}
\newtheorem{Remark}[theorem]{Remark}
\newcommand{\footremember}[2]{%
    \footnote{#2}
    \newcounter{#1}
    \setcounter{#1}{\value{footnote}}%
}
\title{A dynamic optimal reinsurance strategy with capital injections in the Cram\'er-Lundberg model}
\author{%
Zakaria Aljaberi\footremember{Zakaria}{ENIT, Tunisia. Email: zakaria.aljaberi@ieee.org}%
  \and Asma Khedher\footremember{Asma}{Korteweg-de Vries Institute for Mathematics, University of Amsterdam, Postbus 94248, NL–1090 GE Amsterdam, The Netherlands. Email: A.Khedher@uva.nl}%
  \and Mohamed Mnif\footremember{Mohamed}{ENIT, Tunisia. Email: mohamed.mnif@enit.utm.tn}%
  }
\date{\today}
\begin{document}

\maketitle

\begin{center}
    \textbf{Abstract}
\end{center}

\noindent In this article we consider the surplus process of an insurance company within the Cramér–Lundberg framework. We study the optimal reinsurance strategy and dividend distribution of an insurance company under proportional reinsurance, in which capital injections are allowed. Our aim is to find a general dynamic reinsurance strategy that maximises the expected discounted cumulative dividends until the time of passage below a given level, called ruin. These policies consist in stopping at the first time when the size of the overshoot below 0 exceeds a certain limit $a$, and only pay dividends when the reserve reaches an upper barrier $b$. Using analytical methods, we identify the value function as a particular solution to the associated Hamilton Jacobi Bellman equation. This approach leads to an exhaustive and explicit characterisation of optimal policy. The proportional reinsurance is given via comprehensive structure equations. Furthermore we give some examples illustrating the applicability of this method for proportional reinsurance treaties.\\
\\
\textbf{Keywords}\\
Cram\'er-Lundberg model; Reinsurance; Stochastic control; Dividends; Capital Injections; jumps;
 Absolutely Continuous solutions; Scale Functions.

\section{Introduction}
In this paper, we study a dynamic optimal control problem in a setting of a generalised Cram\'er-Lundberg reinsurance model in which capital injections incur proportional cost. The classical Cram\'er-Lundberg  model is a Markov process driven by a drift term describing the premium rate and a compound Poisson process describing the arrival of claims (see e.g., \cite{mandjescramer} for more about this model). The objective is to maximise a discounted dividend payment minus the penalised discounted capital injection until the time of the ruin. The work in the current paper generalises the model in \cite{avram2021equity} to the case of {\it dynamic} reinsurance, where we consider a stochastic control. This stochastic control enters in the dynamics of our model.\\
The problem of the minimisation of the ruin probability by means of reinsurance with proportional cost is well studied in the literature see e.g., \cite{schmidli2001optimal, schmidli2007stochastic, hipp2003optimal, azcue2014stochastic, mnif2005optimal, azcue2005optimal, jgaard1999controlling} where both diffusion and classical Cram\'er-Lundberg models are considered. 
Some additional results with a focus on non-proportional reinsurance contracts and stochastic differential games are given in \cite{hipp2010optimal}. Another modelling approach was to approximate the Cram\'er-Lundberg process by a simple process with exponential jumps called in the literature Vylder approximation, see, e.g., \cite{cohen2020rate}.\\
For risk models without bankruptcy, where shareholders inject capital to cover the deficit, we refer e.g.~to \cite{li2015optimal, avram2022optimizing, avanzi2011optimal, kulenko2008optimal, eisenberg2011optimal}.
Using a different criterion to assess the performance of an insurance portfolio,
\cite{eisenberg2010optimal} thoroughly covers a variety of capital injection minimisation problems
under both the classical Cram\'er-Lundberg risk model and its diffusion approximation where
the insurer has the possibility to dynamically reinsure its risk. 
Combining dividend pay-outs maximisation with proportional risk exposure
reduction, \cite{schal1998piecewise} formulated a piece-wise deterministic Markov model where
only jumps but not the deterministic flow can be controlled. The aforementioned
references deal with optimal reinsurance for continuous-time risk
processes. For a discrete-time insurance model, see e.g., \cite{schal2004discrete} where reinsurance and investments in a financial market with the intention to either maximise the
expected exponential utility or minimise the ruin probability are investigated. 
Optimal control problems considered by maximising expected utility of terminal reserve process are studied, e.g., in \cite{irgens2004optimal,touzi2000optimal, di2024utility}.\\
 In this paper, we investigate the problem of optimal risk control under capital injections, where the objective is to \emph{maximize dividend payments}. Our main contribution, comparable to the work in \cite{avram2021equity}, is the study of this control problem within a \emph{generalized classical risk model}, in which the drift and jump sizes of the surplus process depend on a stochastic process $(\alpha_t)_{t \ge 0}$ representing the reinsurance scheme (see equation~\eqref{Xalpha} in Section~\ref{sec:problem} for a detailed description of the model). To solve this optimization problem, we employ two approaches. In the first, we use the powerful framework of \emph{viscosity solutions}, which allows us to characterize the value function as the unique solution of the associated system of Hamilton--Jacobi--Bellman (HJB) equations within the class of uniformly continuous functions with linear growth. However, the structure of the optimal control cannot be derived directly via the \emph{dynamic programming principle}. To address this issue, we impose additional assumptions on the distribution of the L\'evy process in the model and restrict the optimal reinsurance strategy to change at discrete time points. This enables us to exploit the characteristic function of the L\'evy process, leading to semi-explicit expressions for the optimal control $(\alpha_{t_i}^*)_{1 \le i \le N}$.\\
In our study, we consider a dynamic control problem with the aim of determining, at each time instant $t \geq 0$, a control parameter $\alpha_t$ which specifies a reinsurance scheme chosen from an available set of schemes. The choice of this scheme simultaneously determines the extent to which the insurer wishes to reduce its risk exposure and the additional cost incurred for this protection, in the form of a reinsurance premium. As a by-product, the optimal strategies consist of injecting capital when the size of the deficit below $0$ does not exceed a certain limit $a$, and paying dividends only when the reserve reaches a barrier $b$. The first results addressing this type of problem are due to \cite{de1957impostazione}, where the maximization of the expected value of cumulative discounted dividends up to the time of ruin (i.e., the passage below a given level) is studied. This can be seen as a special case of the above framework, in which the maximum targeted ruin severity is $a = 0$ (or equivalently, the cost of borrowing money is infinite). In \cite{shreve1984optimal}, this problem is treated in the case $a = \infty$.  
For $a \notin \{0, \infty\}$, the authors in \cite{avram2022optimizing} were the first to quantify the optimal buffer (severity of ruin) when bankruptcy should replace individual bailouts, while studying optimality. For a different approach that quantifies the optimal policy under a constraint on the expected total bailouts, see \cite{junca2019optimal}. \\
Different approaches have been used in the literature to solve optimal control problems, relying on duality characterizations, the theory of backward stochastic differential equations, and the theory of partial differential equations, where one derives the so-called Hamilton--Jacobi--Bellman (HJB) equations for the value function (see, e.g., \cite{pham2009continuous, azcue2005optimal} for a comprehensive overview of optimal control problems in a Markovian setting in finance and insurance). Another approach is to exploit model properties, such as strong Markovianity and the form of the characteristic function, to conjecture an explicit solution to the optimal control problem \cite{avram2015gerber, avram2021equity}. \\
In this paper, we consider two approaches to tackle our problem. In the first approach, we derive the HJB equation associated with our model and then prove that the value function is both a sub- and super-viscosity solution to this equation. To prove that it is a super-viscosity solution, we use classical results such as the dynamic programming principle (DPP) (see \cite[Lemma~1.2]{azcue2014stochastic}) and the properties of the Cram\'er--Lundberg model, adapting the proof in \cite[Proposition~3]{avram2021equity} to our setting. To show that the value function is a sub-viscosity solution, we rely on the Markovianity of our model and the It\^o formula. Furthermore, we investigate a comparison principle to establish the uniqueness of the viscosity solution to our HJB equation on $(0, \infty)$. It is worth noting that the authors in \cite{avram2021equity} show that the value function in their setting is the lowest absolutely continuous super-solution of their associated HJB equation, but do not study viscosity solutions.\\
In our second approach, we derive explicit expressions for the optimal strategy by restricting the model to exponential jump sizes. In our derivations, we exploit the fact that, under a reinsurance policy with exponential jump sizes, the model is of L\'evy type, and hence its characteristic function is explicitly known. We follow similar steps to those in the proof of \cite[Propositions~7 and Lemma~10]{avram2021equity}, where the idea is to conjecture a family of policies that yields the optimum for all possible parameter values and to compute its expected net present value in terms of certain scale functions. Our derivations become more involved than those in \cite{avram2021equity} due to the stochastic nature of the strategy.  Recently, a related study by \cite{renaud2023optimizationdichotomycapitalinjections} addressed a similar class of stochastic optimization problems by combining the \emph{viscosity solution approach} with \emph{fluctuation identities}. In that work, these two approaches were successfully unified to fully solve the optimization problem. However, in our model, such a unification is not feasible because of the presence of the parameter $\alpha$, which represents the reinsurance strategy. This parameter makes the derivation of an explicit optimal policy extremely cumbersome.  \\
We also conduct a numerical analysis, where we approximate the HJB equation using a finite-difference scheme. The reinsurance strategy is then computed via the Howard algorithm~\cite{howard1960dynamic}, and several examples are presented to illustrate the applicability of this method for proportional reinsurance treaties. The Howard algorithm proves to be very efficient and converges rapidly to the solution. \\
The paper is organized as follows. In Section~\ref{sec:problem}, we introduce the model and formulate the optimal control problem. Section~\ref{sec:HJB} derives the HJB equation associated with the problem and establishes the existence of a unique viscosity solution. In Section~\ref{sec:Guess}, we derive explicit expressions for the value function and discuss the resulting optimal reinsurance strategy. Section~\ref{sec:num} presents several numerical examples in which we compute the optimal value function and optimal strategy. Finally, Section~\ref{sec:conc} concludes the paper.
\section{Problem formulation}\label{sec:problem}
Let $ (\Omega,\mathcal{F},\mathbb{P})$ be a probability space equipped with a filtration $\mathbb{F} = (\mathcal{F}_t)_{t\geq 0}$ satisfying {\it the usual conditions} of right-continuity and completeness (see e.g.~\cite[Chapter I]{protter2004stochastic}). For $t>0$, we denote $\mathcal{F}_{t-}= \sigma\{\mathcal{F}_{t-h},\,\, h>0\}$. Let $\mathbb{N}= \{1,2, \ldots\}$. For $k\in \mathbb{N}$, we denote by $C^k(\R) =
\{f\colon \R \rightarrow \R \ | \ f \mbox{ is } k\mbox{-times continuously differentiable}\}$. Moreover, we denote by
$\mathbb{L}^1(\Omega, \mathbb{P}) = \{f\colon \Omega \rightarrow \R_+ \ | \ f \mbox{ is } \mathcal{F}-\mbox{measurable and } \int_\Omega |f(\omega)|\mathrm{d} \mathbb{P}(\omega) <\infty$\}.   
 We assume that the claims are generated by
a compound Poisson process. More precisely, we consider an integer-valued random measure
$\mu (dt, dz)$ with compensator $\nu(dz) dt$. We assume that $\nu(dz) =\lambda F(dz)$ where $F(dz)$ is a
probability distribution on the set $E=\R_+$ and $\lambda$ is a positive constant. In this
case, the integral, with respect to the random measure $\mu(dt, dz)$, is simply a compound Poisson process: we have 
$\int_0^t\int_E z \mu(dt, dz)= \sum_{i=1}^{N_{t}}U_{i}$ where 
$N=\{N_t,\, t \geq 0\}$ is a Poisson
process with intensity $\lambda$ and $\{U_i, \,i \in \N\}$ is a sequence of random variables with common
distribution $F$ which represent the claim sizes  which are assumed to be independent and identically
distributed and independent of $(N_t)_{t\geq0}$. For every $i \in \mathbb{N}$, the random variable $U_{i}$, has a distribution function $F$ and a finite mean $\mathbb{E}(U_i)$. Later on, we use $U$ as a representative random variable from this distribution.
Consider an insurer whose surplus process is described, in the {\it Cram\'er-Lundberg model}, also known as compound Poisson model or classical risk model, by
\begin{equation}\label{eq-model-X}
    X_{t}= x + ct -\sum_{i=1}^{N_{t}}U_{i}, \qquad t\geq 0\,,
\end{equation}
where $X_{0}= x\geq 0$ is the initial surplus, $c > 0$ is the premium rate.
We  assume that the premium rate $c$ is calculated according to {\it the expected value principle}, i.e., $c = (1+\eta_{1})\lambda \mathbb{E}(U)$, where $\eta_{1} > 0$, is the relative safety loading of the insurer. 
It is well known, that for avoiding almost sure ruin, it is necessary to choose a premium intensity fulfilling the net-profit condition $c > \lambda \mathbb{E}(U)$. Therefore, based on the expected value premium principle we set $c = (1+\eta_{1})\lambda \mathbb{E}(U)$ with a safety loading $\eta_{1} > 0$. For further details on classical problems in risk theory and related topics we refer, e.g., to \cite{asmussen2010ruin, schmidli2007stochastic}.\\
The insurer(cedent), can control the risk by means of some business activities, such as reinsurance. We assume that the insurer has the possibility to take reinsurance in a dynamic way, i.e., at each
time $t\geq 0$, the insurer transfers a portion of the premium income to a reinsurer, who in turn commits to cover a part of the occurred claims. The dynamic reinsurance setup we consider in this paper follows the presentation from \cite{schmidli2007stochastic} and \cite{cani2017optimal}, with some modifications, as required by our problem.\\
 For a control parameter $\alpha \in [0,1]$, and for a claim of size $u$, the amount $u \alpha$ is paid by the insurer and $u-u \alpha$ is paid by the reinsurer. Naturally, when employing reinsurance there are premiums to be paid. We assume that the reinsurer uses for a fixed $\alpha\in [0,1]$, the premium is based on  the expected value principle
\begin{equation}\label{eq:expected-value}
(1+\eta_{2})\lambda\mathbb{E}[U-U\alpha]\,,
\end{equation}
where $\eta_{2}>\eta_{1}$ denotes the safety loading of the reinsurer and $\mathbb{E}$ denotes the expectation with respect to the probability measure $\mathbb{P}$.\\
 From an aggregated risk perspective, if the insurer chooses a reinsurance $\alpha\in [0,1]$ at time $t$, the premium at rate $(1+\eta_{2})\lambda \mathbb{E}[U-U\alpha]$
is paid to the reinsurer. Consequently the premium income of the insurer reduces to $c(\alpha) = c - (1+\eta_{2})\lambda \mathbb{E}[U-U\alpha]$. It follows that $c(\alpha)\leq c$. Notice that full reinsurance leads to a negative premium income, i.e., $c < (1+\eta_{2})\lambda \mathbb{E}[U]$, for $U \in \mathbb{L}^{1}(\Omega, \mathbb{P})$.\\
In this paper, as we work on the {\it proportional reinsurance}, so 
the set of control parameters is $\mathcal{U}=\{\alpha\in [0,1] \ \ | \ \ c(\alpha)\geq 0\}$ for avoiding a negative premium rate. Since $\mathcal{U} \subset \mathbb{R}$ is a closed and bounded set, it holds that $\mathcal{U}$ is compact.


\begin{Remark}
The idea of a dynamic reinsurance strategy can be explained as follows.
At each time instant $t\geq 0$, the insurer chooses a control parameter $\alpha=\alpha_{t}\in \mathcal{U}$. The choice of $\alpha$ simultaneously determines the extent to which the insurer wants to reduce its risk exposure and the additional cost this protection incurs, taking the form of a reinsurance premium. Namely, if a claim occurs at time $t$, the insurer pays $U\alpha_{t}$ and the reinsurer pays the rest, i.e. $U-U\alpha_{t}$. In exchange of this risk transfer, the insurer pays to the reinsurer a reinsurance premium at a rate $(1+\eta_{2})\lambda \mathbb{E}[U-U\alpha]$.
\end{Remark}
\noindent Let $t\geq 0$ be fixed, and  $\alpha=(\alpha_{s})_{s\geq t}$ be an $\mathbb{F}$-predictable, $\mathcal{U}$-valued stochastic process which we call a reinsurance strategy. The dynamics of the controlled surplus process $X^{t,x,\alpha} = (X_{s}^{t,x,\alpha})_{0\leq t\leq s}$ are described by
\begin{equation}\label{Xalpha}
    X_{s}^{t,x, \alpha}= x + \int_{t}^{s}c(\alpha_{u})\, \mathrm{d}u -\sum_{i=N_t +1}^{N_{s}}\alpha_{\tau_{i}}U_{i}, \qquad s\geq t\,,
\end{equation}
where $x\geq 0$, $(N_s)_{s\geq 0}$ and $\{U_i\}_{i\in \mathbb{N}}$ are as described after equation \eqref{eq-model-X},  $\tau_{i}$ is the time of occurrence of the $i$-th claim, and 
\begin{equation}\label{eq:c-alpha}
c(\alpha_{s})=c - (1-\alpha_{s})(1+\eta_{2})\lambda\mathbb{E}(U) = \lambda \mathbb{E}(U)\left[1+\eta_1-(1-\alpha_s)(1+\eta_2)\right], \quad s\geq t.
\end{equation}
We associate with $X_{t,s}^\alpha$, $0\leq t \leq s$ the jump measure $M$ on $\R_+\times \R_+$ describing the jumps of $X_{t,s}^\alpha$, $0\leq t \leq s$. Its compensator is denoted by $\nu$.\\
Notice that the predictability of $\alpha$ induces that it is progressively measurable (see e.g.~\cite[page 45]{revuz2013continuous}). The latter and the fact that the premium rate $c(\cdot)$ is assumed to be bounded by $c$, imply that the integral $\int_{t}^{s}c(\alpha_{u})\, \mathrm{d}u$, $s\geq t\geq 0$, exists in the Lebesgue sense and is progressively measurable. Moreover, since $\alpha$ is $\mathcal{U}$-valued, it holds 
\begin{equation}\label{cphul}
    \alpha_{s}\geq\underline{\alpha}\,,\quad     \mbox{ for all } (s; \omega) \ \  \mbox{a.e.}\,,
\end{equation}
where $\underline{\alpha}$, the lowest admissible retention, is the unique solution of $c(\underline{\alpha})=0$ and is explicitly given by $\underline{\alpha}=(\eta_{2}-\eta_{1})/(\eta_{2}+1).$ In this context the predictability is crucial. That is, at claim time $\tau_i$, the reinsurance parameter is chosen based on the information up to time $\tau_i-$. The predictability of the reinsurance strategy is a natural assumption in this setting, otherwise the insurer could change the reinsurance parameter to $\underline{\alpha}$ reinsurance at the claim occurrence time. The reinsurer would then pay the majority of claims while all premiums would be collected by the insurer. \\
Let $t_0<t_1< \ldots$ be time points.
In this paper we consider $\alpha$ to be such that
\begin{equation}\label{eq:alpha}
\alpha_t = \alpha_{t_i}, \qquad \mbox{for } t \in [t_i, t_{i+1})
\end{equation}
and $\alpha_{t_i} \in \mathcal{F}_{t_i-}$, for all $i \in \mathbb{N}$, i.e., $\alpha$ is $\mathbb{F}$-predictable.
Notice that one can consider the insurance period $[t_{i}, t_{i+1})$ as one period or divide it into n periods $t_{i} = t_{i_{0}} < t_{i_{1}} <... < t_{i_{n}}=t_{i+1}$.
It is up to the insurance company to consider the reinsurance rate as fixed or can be changed during a period of insurance, that is on $[t_{i}, t_{i+1})$, respectively
$$\alpha_{t}=\alpha_{t_i}\,, \quad \mbox{ or } \quad \alpha_{t}= \alpha_{t_{i_{j}}}\mathbb{1}_{[t_{i_{j}}, t_{i_{j+1}})}.$$
Let $(L_s)_{s\geq t}$ and $(I_s)_{s\geq t}$, be $\mathbb{F}$-predictable processes describing respectively dividends and capital injections. Let $\pi=(L_{s},I_{s})_{s\geq t}$. Given a reserve $x$ at time $t \geq 0$ and a policy $(\alpha, \pi)$, the surplus process is modified by dividends and capital injections as
\begin{equation}\label{XLI}
X_{s}^{t,x,\alpha,\pi}=X_{s}^{t,x,\alpha}-L_{t,s}+I_{t,s}\,, \qquad \ \ \forall s\geq t\,,
\end{equation}
where $L_{t,s} = L_{s}- L_t$ and $I_{t,s}$ is defined similarly. Let $\bigtriangleup X_{s} =X_{s}-X_{s-}$ for a given c\`adl\`ag process $(X_t)_{t\geq0}$. We introduce in the following definition the notion of admissible strategies.   
\begin{definition}\label{def:admissible}
A strategy $(\alpha_u, \pi_u)_{u\geq t}$ is said to be {\it admissible} if
\begin{enumerate}
    \item $(\alpha_{u})_{u\geq t}$ is {$\mathbb{F}$ predictable} and  satisfies (\ref{cphul}) and \eqref{eq:alpha}.
    \item The cumulative dividend strategy $(L_u)_{u\geq t}$ is $\mathbb{F}$-predictable, non-decreasing, and c\`adl\`ag (right-continuous  with left-limits), $ L_{0-} = 0$.
    \item The cumulative capital injection process $(I_u)_{u\geq t}$ is $\mathbb{F}$-predictable, non-decreasing, c\`adl\`ag, $ I_{0-} = 0$ and is such that
    $I_u+x\leq \sum_{i=1}^{u}\alpha_{\tau_i}U_i$, for all $u\geq 0$, where $x$ is the initial reserve.
    \item $\bigtriangleup L_{u}\leq X_u^{t,x,\alpha,\pi}$$\, for  $ $(u, \omega)$, a.e. Notice that this inequality expresses the fact that the insurer is not allowed to pay out dividends at time $s\geq t$ which exceed the level of his reserve at this time.
    \end{enumerate}
\noindent Let $a$ be a positive constant. Given a reserve $x\geq -a$ at time $t\geq 0$, the set of all admissible policies will be denoted by $\prod_t(x)$.
\end{definition}
\noindent The time of ruin $\sigma_{t,a}^{x,\alpha, \pi}$ denotes the first time when the controlled surplus process$X_s^{t,x,\alpha,\pi}$ falls behind the negative value $-a$,
\begin{equation}\label{tau}
    \sigma_{t,a}^{x,\alpha, \pi} = \inf\{s > t, \mbox{ such that } X_s^{t,x,\alpha,\pi} < -a \}, \qquad t\geq 0\,.
\end{equation}
Since $(X_s^{t,x,\alpha, \pi})_{s\geq t}$ is a c\`adl\`ag, $\mathbb{F}$-adapted process and $\mathbb{F}$ is assumed to satisfy the usual conditions, it holds that $\sigma_{t,a}^{x,\alpha, \pi}$ is an $\mathbb{F}$-stopping-time, for all $t\geq 0$ (see e.g.~\cite[Chapter I, Proposition 4.6]{revuz2013continuous}). Given a reserve $x\geq 0$ at time $t \geq 0$, we define the expected cumulative discounted dividends until ruin, for $(\alpha, \pi)\in \prod_t(x)$, as
\begin{equation}\label{eq:function-J}
J(t,x,\alpha,\pi)=\mathbb{E}\left[\int_{t}^{\sigma_{t,a}^{x,\alpha, \pi}}\mathrm{e}^{-q(s-t)}(\mathrm{d}L_{s}-k \,\mathrm{d}I_{s}) \ | \ X_{t}^{t,x,\alpha, \pi}=x\right]\,, \qquad t\geq 0\,,
\end{equation}
where $q>0$ is a discount factor, $k>1$, and $\sigma_{t,a}^{x,\alpha, \pi}$ is the ruin time defined by (\ref{tau}). 
Notice that since $L$ and $I$ are $\mathbb{F}$-predictable and non-decreasing c\`adl\`ag processes, then the integrals of the form $\int_{t}^{u}\mathrm{e}^{-q(s-t)}(\mathrm{d} L_{s}-k\, \mathrm{d}I_{s})$, $u\geq t\geq 0$, exist in the Lebesgue–Stieltjes sense and are progressively measurable (see e.g.~\cite[Proposition 3.4, Chapter I]{jacod2013limit}).
In the sequel, we denote by $\mathbb{E}_x[\cdot]= \mathbb{E}[\cdot \ | \ \ X_{t}^{t,x,\alpha, \pi}=x]$, $t, x\geq 0$ and refer to the function $J$ defined in \eqref{eq:function-J} as the criterion to optimise. The optimisation problem then consists of finding the optimal value function
\begin{equation}\label{v(x)}
    v(t,x) = \sup_{(\alpha,\pi)\in \prod_t(x)}J(t,x,\alpha, \pi), \qquad t, x\in \mathbb{R}_{+}
\end{equation}
and an optimal admissible strategy $(\alpha^{*}, \pi^{*})$ leading to the optimal value function, i.e. a strategy which delivers the maximal return function (\ref{v(x)}). As a result our optimal strategies consist of injecting capital when the size of the deficit below $0$ does not
exceed a certain limit $a$, and paying dividends only when the reserve reaches a certain barrier $b>0$.\\
To solve the optimization problem \eqref{v(x)}, we employ two approaches: the viscosity-solution approach (Section \ref{sec:HJB}) and an analytical approach based on exploiting the characteristic function of the Lévy model under additional structural assumptions on our framework (Section \ref{sec:Guess}). Using the first approach, we do not require the restriction to piecewise constant reinsurance strategies. However, in the second part of the paper, the situation is different. To compute the explicit form of the value function, we need to work with moment-generating functions, as in Equation~(4.1) below, and identify the corresponding Laplace exponent. This step would not be feasible under a fully continuous reinsurance control. Therefore, we consistently assume piecewise constant reinsurance strategies throughout the paper in order to make the analysis tractable and to derive explicit closed-form results. This assumption is also realistic from a practical perspective, since reinsurance contracts are typically adjusted at discrete time intervals. In our model, the insurer and the reinsurer negotiate the reinsurance strategy for a fixed period (e.g., one year or six months), during which it remains unchanged. The reinsurer does not accept instantaneous modifications to the agreed strategy.
\section{The Value Function for the Cram\'er-Lundberg Model}\label{sec:HJB}
In this section, we establish a verification-type result for the stochastic control problem (\ref{v(x)}). We follow the next steps:
\begin{enumerate}
    \item First, we focus on the regularity properties of the value function $v$(lower and upper-bound and Lipschitz-continuity) in Proposition (\ref{Prop1}).
    \item Second, we prove the connection between this value function and the associated partial integral differential equation (of HJB-type) in Theorem (\ref{Thvs}).
    \item Third, we state a comparison principle in which we prove the  uniqueness results for viscosity solutions to the associated HJB equation.
\end{enumerate}
All the results in this section are valid for general laws of the claims $U$. To emphasise this, we consider that the law of $U$ is given by its distribution function denoted by $F$.
\subsection{Some Elementary Properties of the Value Function}
We start with gathering some elementary properties of the value function (\ref{v(x)}) in the following proposition. The proof follows closely similar derivations as in \cite{avram2021equity}  we present it here for completness.  

\begin{Proposition}\label{Prop1}
The following properties hold.
\begin{enumerate}
   \item \label{prop:v-properties1}
For every $t \in \mathbb{R}_{+}$ and $x\in [-a,\infty)$, the set of admissible strategies $\prod_t(x)$ is non-empty.\\
    If $x, x' \in [-a,\infty) \mbox{ such that } x\leq x'$, then $\prod_t(x)\subset \prod_t(x'),\  v(t, x)\leq v(t, x')$, and for every $(\alpha, \pi)\in \prod_t(x),\\  \sigma_{t,a}^{x,\alpha, \pi}\leq  \sigma_{t,a}^{x',\pi,\alpha},\,\, \mathbb{P}-a.s$. 
    \item  \label{prop:v-properties2} For $t\geq 0,\, x \geq -a$, the value function $v(t,x)$ admits the following bounds:
    \begin{enumerate} \label{eq:vbound}
        \item $v(t,x)\geq x^+$,\, where $x^+=\max\{x,0\}$.
        \item $v(t,x)\leq C(1+|x|)$, where $C$ is a positive constant.
    \end{enumerate}
    \item \label{prop:v-properties3} For every $t\geq 0$, the value function is continuous on $[-a,\infty)$.
\end{enumerate}
\end{Proposition}
\begin{proof}
\begin{enumerate}
    \item The classical (no dividend, no injection, no reinsurance) (0, 0, 1) policy is an admissible strategy. The second result is a mere consequence of the definition of $X^{\alpha,\pi}$.
    \item \begin{enumerate}
       \item For $x\geq 0$, we consider the strategy consisting in no capital injection $(I = 0)$, then paying $L_{0}= x$ and (continuously) all the premium $c$ to reinsurance and declare bankruptcy at the ruin time. For this admissible strategy $(\pi^{0},\underline{\alpha})$, we have 
       \begin{align*}
           v(t,x)&\geq J(t,x,\pi^{0},\underline{\alpha})\\           &=x+\mathbb{E}_x\left[\int_{t}^{\tau_{1}}\mathrm{e}^{-q(s-t)} c(\underline{\alpha})\, \mathrm{d} s\right]\\
           &=x\,.
       \end{align*}$$$$
        For $x\in [-a,0)$, by choosing the strategy $(0,0,1)$ until the ruin time, we deduce that $v(t,x)\geq 0$ and so we have $v(t,x)\geq x^+$.
      \item \underline{First case $x\geq 0$:} Let $(\pi,\alpha)$ be an arbitrary admissible strategy. Since $c(\alpha_{s})\leq c$ for all $s\geq0$, then from equation \ref{XLI}, we obtain that 
      $X_{s}^{t,x,\alpha,\pi}\leq x+c(s-t)$. If the insurer’s reserve is $x+c(t-s)$, then, due to discount factor, the optimal dividend strategy is to give $x$ to shareholders at time $t$, followed by a continuous payment of $c$ per unit of time. It shows that $$ J (t,x,\pi,\alpha)\leq x+ \mathbb{E}_{x}\left[\int_{t}^{\infty}\mathrm{e}^{-q(s-t)}c\, \mathrm{d} s\right]=x+ \frac{c}{q}\,.$$
        Taking the supremum over all admissible strategies shows that the value function satisfies: $v(t,x)\leq C(1+x)$, where $C$ is a positive generic constant which could from line to line.\\
        \underline{Second case $x\in [-a,0)$:} 
        from the monotonicity of the value function (Proposition \ref{Prop1} \ref{prop:v-properties1}) and the fact that $v(t,0) \leq C$, it holds that the valued function satisfies $v(t,x) \leq v(t,0) \leq C$.\\
        The two cases show that for fixed $t\geq 0$ and for all $x\geq -a$, we have $v(t,x)\leq C(1+|x|)$, where $C$ is a positive constant.
        \end{enumerate}
    \item We fix $\epsilon>0$, $x\geq -a$ and $t\geq 0$. By monotonicity of the value function, we have: 
    \begin{eqnarray}\label{cad1}
    v(t,x)\leq v(t,x+\epsilon). 
    \end{eqnarray}
We consider a strategy $(\pi,\alpha)\in \prod_t(x+\epsilon)$. We denote by 
$(\pi_{\epsilon},\alpha)$ the strategy $ (L, I+\epsilon,\alpha)$ where the strategy $I+\epsilon$ consists on injecting at time $t$ the amount $\epsilon$ and following the strategy $I$ until ruin. Then 
   $I_s\leq \sum_{i=1}^{N_{s^-}}\alpha_{\tau_i} U_i-(x+\epsilon) $ 
   which implies $I_s + \epsilon \leq \sum_{i=1}^{N_{s^-}}\alpha_{\tau_i} U_i-x $ and $\bigtriangleup L_{s}\leq X_s^{t,x+\epsilon,\alpha,\pi}=X_s^{t,x,\alpha,\pi_{\epsilon}}$, and so $(L, I+\epsilon, \alpha)$ is an admissible strategy.
    It yields that one could modify on $\mathbb{R}_{+}$ an admissible strategy $(\pi,\alpha)\in \prod_t(x+\epsilon)$ into $(\pi_{\varepsilon},\alpha)= (L, I+\epsilon)\in\prod_t(x)$. 
    Then, from the definition of the criterion $J$, and by taking the supremum over all admissible strategies, we obtain, 
    \begin{eqnarray}\label{cad2}
    v(t,x+\epsilon)-k\epsilon\leq v(t,x).
    \end{eqnarray}
    From inequalities (\ref{cad1}) and (\ref{cad2}), we deduce the right continuity of the value function $v$ on $[-a,\infty)$.
    For the left continuity, we fix $\epsilon>0$, $x> -a$ and $t\geq 0$ such that $x-\epsilon >-a$. From the right continuity inequalities (\ref{cad1}) and (\ref{cad2}), we know that for $y>-a$
    \begin{eqnarray}\label{cag1}
    v(t,y)\leq v(t,y+\epsilon)\leq v(t,y)+k\epsilon.
    \end{eqnarray}
    By putting $x=y+\epsilon$, inequality (\ref{cag1}) implies
    \begin{eqnarray}\label{cag2}
    v(t,x)-k\epsilon\leq v(t,x-\epsilon)\leq v(t,x).
    \end{eqnarray}
    This shows the left continuity of $v$ on $(-a,\infty)$ and so $v$ is continuous on $[-a,\infty)$.
\end{enumerate}
\end{proof}
\begin{Remark}
In fact, in the proof of Proposition \ref{Prop1}[\ref{prop:v-properties3}], one can show that for all $y\in [x,x+\epsilon]$, we have $v(t,x)\leq v(t,y)\leq v(t,x)+k\epsilon$ and for all $y\in [x-\epsilon,x]$, we have $v(t,x)-k\epsilon\leq v(t,y)\leq v(t,x)$. This proves that the value function $v$ is uniformly continuous with respect to $x$. Such class of functions is denoted by $UC_x((-a,+\infty))$ for which there is a modulus of continuity $m$ and $r>0$ such that 
\begin{eqnarray*}
    |v(t,x)-v(t,y)|\leq m(|x-y|) \mbox{ for }x,y\in (-a,+\infty), |x-y|\leq r. 
\end{eqnarray*}
We recall that $m$ is a map from $[0,+\infty)$ into $[0,+\infty)$ and satisfy:
\begin{eqnarray*}
&(i)& m \mbox{ is continuous and nondecreasing};\\ 
&(ii)& m(0)=0;\\
&(iii)& m(a+b)\leq m(a)+m(b);\\
&(iv)& m(r)\leq m(1)(1+r).
\end{eqnarray*}
\end{Remark}
\noindent From the latter proposition and remark we infer that the value function $v(t, \cdot)$, for all $t\geq 0$, is positive, monotone increasing, and uniformly continuous. 
\subsection{The HJB System}
Let 
\begin{equation}
G=\{\phi\colon \R_+\times \R_+\rightarrow \R_+ \ | \ \phi(t, \cdot) \in C^1(\R_+), \forall t\geq 0\}.
\end{equation}
Then we define $H: \R_+ \times \R\times G \times \R_+ \rightarrow \R$
as  
\begin{equation}\label{Hm}
   \begin{aligned}
H(t,x,\phi,w) & := \sum_{i}\left[\left(\sup_{\alpha\in [\underline{\alpha}, 1]}\left[c(\alpha)w + \lambda\int_{\mathbb{R}_+} \phi(t,x - \alpha y)\, \mathrm{d} F(y) - (q+\lambda)\phi(t,x)\right]\right)\mathbb{1}_{\{t=t_{i}\}}\right.\\
     &\qquad  + \left.\left(c(\alpha_{t_{i}})w + \lambda\int_{\mathbb{R}_+} \phi(t,x - \alpha_{t_{i}} y)\, \mathrm{d}F(y) - (q+\lambda) \phi(t,x)\right)\mathbb{1}_{\{t\in (t_{i}, t_{i+1})\}} \right],
    \end{aligned}
    \end{equation}
where 
\begin{equation}
    \alpha_{t_i}=\arg\max_{\alpha\in [\underline{\alpha}, 1]} \left[c(\alpha)w + \lambda\int_{\mathbb{R}_+} \phi(t,x - \alpha y)\, \mathrm{d} F(y) - (q+\lambda)\phi(t,x)\right].
\end{equation}
By abuse of notation we use $v'(t,x) =\partial v(t,x)/\partial x$.\\
We will use the Dynamic Programming approach to characterize the value function as viscosity solution of the associated system of HJB equations. In the proof of such characterization, 
we apply the following {\it dynamic programming principle}: 
for any $t\geq 0$ and any stopping time $\tau\geq t$, it holds
\begin{equation}\label{DPP}
   v(t,x) = \sup_{(\alpha, \pi)\in \prod_t(x)}\mathbb{E}_{x}\left[\int_{[t, \tau \wedge\sigma_{t,a}^{x,\alpha, \pi}]} \mathrm{e}^{-q(s-t)}(\mathrm{d} L_{s}-k\, \mathrm{d}I_{s})+\mathrm{e}^{-q(\tau \wedge\sigma_{t,a}^{x,\alpha, \pi}-t)}v(\tau \wedge\sigma_{t,a}^{x,\alpha, \pi},X^{t,x,\alpha,\pi}_{\tau \wedge\sigma_{t,a}^{x,\alpha, \pi}})\right]. 
\end{equation}
The proof of the dynamic principle is quite similar to the one without injections, see e.g.~\cite[lemma 2.2]{azcue2014stochastic}. \\

We consider the following HJB System:

\begin{eqnarray}
    \max\{H(t,x,v ,v'(t,x)), 1 - v'(t,x), v'(t,x)-k\} =0, & x \in (0,\infty),\label{HJB1}\\
    \max \{H(t,x,v ,v'(t,x)), v'(t,x) - k\}  =0\,, &  x \in (-a, 0),\label{HJB2} \\
    v(t,x)  =0\,, &  x \in (-\infty,-a],
\end{eqnarray}
where $H$ is as in \eqref{Hm}. The lower bound $1$ on the partial 
derivative of $v$ is linked to the possibility of lump-sum dividend payments, and the $k$ upper bound
is linked to the possibility of lump-sum instant reserves replenishment. 
The idea is to find the supremum of $\alpha\in [\underline{\alpha}, 1]$ at time  $t=t_{i}$ and use this supremum during the period $[t_{i}, t_{i+1})$.\\
In the next theorem, we will use the powerful tool in stochastic control named viscosity solutions. we show that for any time $t \geq 0$, the
value function $v$ in (\ref{v(x)}) is  a viscosity solution to \eqref{HJB1}-\eqref{HJB2}. We use the notion of continuous viscosity solutions defined on $(0,\infty)$ as follows:
\begin{definition}\label{def:test-function} We fix $t\geq 0$. Let $u(t,.): (-a,0)\cup(0, \infty) \to \mathbb{R}$ be a uniform continuous function. 
\begin{itemize}
    \item $u$ is a viscosity supersolution of (\ref{HJB1})-(\ref{HJB2})
    at \( x_{0} \in (-a,0)\cup (0, \infty) \), if any function \( \psi: (-a,0)\cup (0,\infty) \to \mathbb{R} \) continuously differentiable in the second variable such that
    \( u - \psi \) reaches the minimum at \( x_{0} \) satisfies
    \begin{eqnarray*}
    \max\{H(t,x_{0},u(t,x_{0}),\psi'(x_{0})), 1 - \psi'(x_{0}),  \psi'(x_{0})-k\}  &\leq& 0,\,\mbox{if }x_0 \in (0,\infty)\\
    \max \{H(t,x_{0},u(t,x_{0}),\psi'(x_{0})),  \psi'(x_{0})-k\}&\leq& 0,\,\mbox{if }x_0 \in (-a,0).
    \end{eqnarray*}
    \item $u$ is a viscosity subsolution of (\ref{HJB1})-(\ref{HJB2}) at \( x_{0} \in (-a,0)\cup(0, \infty) \)
    if any function \( \psi: (-a,0)\cup (0, \infty)  \to \mathbb{R} \) continuously differentiable in the second variable such that \( u - \psi \)
    reaches the maximum at \( x_{0} \) satisfies
    \begin{eqnarray*}
    \max\{H(t,x_{0},u(t,x_{0}),\psi'(x_{0})), 1 - \psi'(x_{0}),  \psi'(x_{0})-k\}  &\geq& 0,,\,\mbox{if }x_0 \in (0,\infty)\\
     \max \{H(t,x_{0},u(t,x_{0}),\psi'(x_{0})),  \psi'(x_{0})-k\}&\geq& 0,\,\mbox{if }x_0 \in (-a,0).
    \end{eqnarray*}
    \item We say that $u$ is a continuous viscosity solution of (\ref{HJB1})-(\ref{HJB2}) at \( x_{0} \in (-a,0)\cup(0, \infty) \) if it is both a subsolution and a supersolution of (\ref{HJB1})-(\ref{HJB2}).
\end{itemize}
\end{definition}
\noindent We use a similar definition for the viscosity solution on $(-a,0)$. \\
The HJB system \eqref{HJB1}--\eqref{HJB2} is defined on open sets, namely the open intervals $(0,\infty)$ and $(-a,0)$. We made this choice because the viscosity solutions are naturally defined on open sets. Alternatively, one could use the concept of constrained viscosity solutions defined on closed sets, which is more delicate. However, in both cases, the comparison theorem is available only on open sets, which justifies our approach.
\begin{theorem}\label{Thvs}
We fix $t\geq 0$. The value function $v(t, \cdot)$ as defined in (\ref{v(x)}) is a continuous viscosity solution of (\ref{HJB1})-(\ref{HJB2}) in $(-a,0)\cup(0,\infty)$.
\end{theorem}
\begin{proof}
We first prove that $v(t, \cdot)$ is a viscosity {\it super-solution} of (\ref{HJB1}) in $(0, \infty)$, where $t\in [t_i,t_{i+1})$ is fixed.
Given an initial surplus $x> 0$ and any $\delta > 0 $, we consider the admissible strategy $(\pi_s, \alpha_s) := (\delta s, 0, \alpha)$, $s\geq 0$ (which pays dividends at constant rates $\delta$, there is no capital injection, and we choose a constant proportional reinsurance strategy $\alpha \in [\underline \alpha,1]$).
The corresponding controlled surplus process is
\begin{equation*}
    X_{s}^{t,x,\alpha,\pi}= X_{s}^{t,x,\alpha}-\delta (s-t)\,, \qquad 0\leq t\leq s\,.
\end{equation*}
We denote by $\tau_{1}$ the first claim. One could choose $u$ such that $t_{i+1}> u >t\geq t_{i}$ and $ \tau_{1} \wedge u \leq  \sigma_{t,a}^{x,\alpha, \pi}$.   
With the particular choice of $(\pi, \alpha) := (\delta s, 0 , \alpha)$, the dynamic programming principle \eqref{DPP} implies that,
\begin{align}\label{eq:value-funtion-t-i}
    v(t,x) &\geq \mathbb{E}_{x}\left[\mathrm{e}^{-q(u \wedge \tau_{1}-t )}v(u \wedge \tau_{1},X^{t,x,\alpha,\pi}_{u \wedge \tau_{1}})\right]+\mathbb{E}_{x}\left[\int_{t}^{u \wedge \tau_{1}}\mathrm{e}^{-q(s-t)} \mathrm{d} L_{s}\right]  \nonumber\\
    &=\mathrm{e}^{-q(u-t)}v(t,x+(c(\alpha)-\delta)(u-t))\mathbb{P}_{x}(\tau_{1}>u)+\mathbb{E}_{x}\left[\mathbb{1}_{\tau_{1}\leq u} \mathrm{e}^{-q(\tau_{1}-t)}v(\tau_{1}, X^{t, x,\alpha,\pi}_{\tau_{1}}) \right] \nonumber\\
    &\qquad +\mathbb{E}_{x}\left[\int_{t}^{u}\mathrm{e}^{-q(s-t)}\delta\, \mathrm{d} s \ \ \mathbb{1}_{\tau_{1}> u}\right] +\mathbb{E}_{x}\left[\int_{t}^{\tau_{1}}\mathrm{e}^{-q(s-t)}\delta\, \mathrm{d} s \ \  \mathbb{1}_{\tau_{1}\leq u}\right]\nonumber\\
    &\geq \mathrm{e}^{-q(u-t)}v(t,x+(c(\alpha)-\delta)(u-t))\mathbb{P}_{x}(\tau_{1}>u)+\mathbb{E}_{x}\left[\mathbb{1}_{\tau_{1}\leq u} \mathrm{e}^{-q(\tau_{1}-t)}\right] \int_{\mathbb{R}_{+}} v(t,x - \alpha y)\,\mathrm{d}F(y) \nonumber\\
    &\qquad +\frac{1}{q} \delta(1-\mathrm{e}^{-q(u-t)})\mathbb{P}_{x}(\tau_{1}>u) +\frac{1}{q} \delta \mathbb{E}_{x}\left[(1-\mathrm{e}^{-q(\tau_{1}-t)}) \mathbb{1}_{\tau_{1}\leq u}\right]\nonumber\\
    &=\mathrm{e}^{-(\lambda+q)(u-t)}v(t,x+(c(\alpha)-\delta)(u-t))\nonumber\\\
    &\qquad +\frac{1}{\lambda+q}(1-\mathrm{e}^{-(\lambda+q)(u-t)})\left[\lambda \int_{\mathbb{R}_{+}} v(t,x - \alpha y)\, \mathrm{d}F(y) +\delta \right],
\end{align}
where in the third inequality, we used that for $t_i \leq t \leq\tau_1\leq u$,
\begin{align*}
    X_{\tau_1}^{t,\alpha,\pi} &=x + c(\alpha)(\tau_1-t) - \alpha U_1\\
    &\geq x-\alpha U_1,
\end{align*}
that the value function $v(t, \cdot)$ is increasing for all $t\geq 0$ (see property \ref{prop:v-properties1} in Proposition \ref{Prop1}), and that $\tau_i$ and $U$ are independent for every $i \in \mathbb{N}$. By passing $v(t,x)$ on the right-hand side of \eqref{eq:value-funtion-t-i} and dividing by $u-t$, we infer
\begin{align}\label{eq:bigger-than-0}
        0 &\geq \frac{\mathrm{e}^{-(\lambda+q)(u-t)}\left(v(t,x+(c(\alpha)-\delta)(u-t))-v(t,x)\right)+\mathrm{e}^{-(\lambda+q)(u-t)} v(t,x)-v(t,x)}{u-t}\nonumber\\
        &\qquad +\frac{1}{\lambda+q} \frac{1-\mathrm{e}^{-(\lambda+q)(u-t)}}{u-t} \left[\lambda \int_{\mathbb{R}_{+}} v(t,x - \alpha y)\, \mathrm{d}F(y) +\delta\right]\nonumber\\
        &= \frac{\mathrm{e}^{-(\lambda+q)(u-t)}\left(v(t,x+(c(\alpha)-\delta)(u-t))-v(t,x)\right)}{u-t}+\frac{(\mathrm{e}^{-(\lambda+q)(u-t)} -1)}{u-t}v(t,x)\nonumber \\
        &\qquad +\frac{1}{\lambda+q} \frac{1-\mathrm{e}^{-(\lambda+q)(u-t)}}{u-t} \left[\lambda \int_{\mathbb{R}_{+}} v(t,x - \alpha y)\, \mathrm{d}F(y) +\delta\right]\,.
\end{align}
Let $\psi \in G$ be such that $\psi(t,x)=\psi(t_{i},x), \forall  t \in[t_{i},t_{i+1})$ and $\forall x \in \R_+$. Moreover, let $x_{0}$ be fixed in $(0,\infty)$ be a strict global minimiser of $v-\psi$, i.e., 
\begin{equation*}
    0=(v-\psi)(t,x_{0})=\min_{x\in(0,\infty)}(v-\psi)(t,x), \ \  \mbox{for all } t\geq 0.
\end{equation*}
We get from \eqref{eq:bigger-than-0}
\begin{align}\label{eq:psi-0}
        0 &\geq \frac{\mathrm{e}^{-(\lambda+q)(u-t)}\left(\psi(t,x_0+(c(\alpha)-\delta)(u-t))-\psi(t,x_0)\right)}{u-t}+\frac{(\mathrm{e}^{-(\lambda+q)(u-t)} -1)}{u-t}v(t,x_0)\nonumber\\
        &\qquad +\frac{1}{\lambda+q} \frac{1-\mathrm{e}^{-(\lambda+q)(u-t)}}{u-t} \left[\lambda \int_{\mathbb{R}_{+}} v(t,x_0 - \alpha y)\, \mathrm{d}F(y) +\delta\right]\,.
\end{align}
Taking $u \downarrow t$ in \eqref{eq:psi-0}, we deduce
\begin{equation}\label{eq:HJB-psi}
    0\geq c(\alpha)\psi'(t,x_0) + \lambda\int_{\mathbb{R}_+} v(t,x_0 - \alpha y)\, \mathrm{d} F(y) - (q+\lambda)v(t,x_0)+\delta(1-\psi'(t,x_0)), 
\end{equation}
for any $x_0 \in (0, \infty)$, $t \in [t_i, t_{i+1})$, and  $\delta\geq 0$. Taking $\delta=0$ in \eqref{eq:HJB-psi}, we infer
\begin{equation}\label{HSupert}
    0\geq c(\alpha)\psi'(t,x_0) + \lambda\int_{\mathbb{R}_+} v(t,x_0 - \alpha y)\, \mathrm{d} F(y) - (q+\lambda)v(t,x_0),
\end{equation}
Since \eqref{HSupert} holds true for any $\alpha\in [\underline{\alpha}, 1]$ we obtain for $t=t_i$,
\begin{equation*}
    0\geq \sup_{\alpha\in [\underline{\alpha}, 1]} \left\{ c(\alpha)\psi'(t_{i},x_0) + \lambda\int_{\mathbb{R}_+} v(t_{i},x_0 - \alpha y)\, \mathrm{d} F(y) - (q+\lambda)v(t_{i},x_0)\right\},
\end{equation*}
and for any $t \in (t_i, t_{i+1})$, we get
\begin{equation}\label{super1}
    0\geq c(\alpha_{t_{i}})\psi'(t,x_0) + \lambda\int_{\mathbb{R}_+} v(t,x_0 - \alpha_{t_{i}} y)\, \mathrm{d} F(y) - (q+\lambda)v(t,x_0),
\end{equation}
where 
\begin{equation}
    \alpha_{t_i}=\displaystyle \arg \max_{\alpha\in [\underline{\alpha}, 1]} \left[c(\alpha)w + \lambda\int_{\mathbb{R}_+} \phi(t_i,x - \alpha y)\, \mathrm{d} F(y) - (q+\lambda)\phi(t_i,x)\right].
\end{equation}
Moreover, since \eqref{eq:HJB-psi} is valid for any $x_0 \in (0, \infty)$, $\alpha\in [\underline{\alpha}, 1]$, then sending $\delta$ to infinity, we obtain 
\begin{equation}\label{super2}
1-\psi'(t,x)\leq 0.
\end{equation}
Furthermore, we infer that 
\begin{equation}\label{super3}
\psi'(t,x_0)-k \leq 0.
\end{equation}
by observing that
\begin{equation*}
 \psi(t,x_0+\varepsilon)-\psi(t,x_0) \leq  v(t,x_0+\varepsilon)-v(t,x_0)\leq k\varepsilon.
\end{equation*}
From inequalities (\ref{super1}),(\ref{super2}) and (\ref{super3}), we deduce that $v(t, \cdot)$ is a viscosity {\it super-solution} of (\ref{HJB1}) in $(0, \infty)$.\\
Next we prove that $v(t, \cdot)$ is a viscosity sub-solution of (\ref{HJB1}) in $(0, \infty)$. 
Let $\psi \in G$ be such that $\psi(t,x)=\psi(t_{i},x),\mbox{ for fixed }  t \in[t_{i},t_{i+1})$ and $\forall x \in \R_+$. Moreover, let $x_{0} \in (0,\infty)$ be a strict global minimiser of $v-\psi$, i.e., 
\begin{equation*}
    0=(v-\psi)(t,x_{0})=\max_{x\in(0,\infty)}(v-\psi)(t,x).
\end{equation*}
We aim to show that
\begin{equation}\label{HJBsubspsi}
\max\{H(t,x_{0},v(t,x_{0}),\psi'(t,x_{0})), 1 - \psi'(t,x_{0}),  \psi'(t,x_{0})-k\}  \geq 0.
\end{equation}
Let $x_0, \zeta \geq 0$. Define $B(x_0, \zeta) = \{x, |x-x_0|\leq \zeta\}$.
Suppose that (\ref{HJBsubspsi}) does not hold. Then the left-hand side of (\ref{HJBsubspsi}) is negative. By smoothness of $\psi$ and since $x \rightarrow H(t, x, \cdot,\cdot)$ is continuous, we know there exists $\zeta>0$ and $\xi>0$ and $\varepsilon>0$ satisfying
\begin{equation}\label{HJBsubsinf}
\max\{H(t,x,v,\psi'(t,x)), 1 - \psi'(t,x), \psi'(t,x)-k\}  < - q\varepsilon, 
\end{equation}
for all $x \in B(x_{0},\zeta)$ and $t \in [t_i, t_{i+1})$ and
\begin{equation}\label{inB}
    v(t,x')\leq \psi(t,x')-\xi,
\end{equation}
where $x'=x_{0}\pm\zeta$. Let $(\alpha, \pi)\in \prod_t(x)$ and 
\begin{equation*}
    \tau=\inf \{s\geq t, \  X_{s}^{t,x_{0},\alpha,\pi} \notin B(x_{0},\zeta)\}.
\end{equation*}
It holds that $\tau$ is an $\mathbb{F}$-stopping time (see e.g.~\cite[Chapter I, Proposition 4.6]{revuz2013continuous}) and by right continuity of $X_{.}^{t,x_{0},\alpha,\pi}$, we have $\tau>t$, a.s. 
We truncate $\tau$ by a fixed constant $T>0$ in order to make it finite.
We introduce
\begin{equation*}
\hat{\tau}=\tau\land T.
\end{equation*}
On the set $\{\hat{\tau}=\tau \}$, using that $v$ is non-decreasing and $\psi'$ is positive, we get from (\ref{inB})
\begin{equation*}
    v(\hat{\tau},X_{\hat{\tau}}^{t,x_{0},\alpha,\pi})\leq v(\hat{\tau},x_{0}-\zeta)\leq \psi(\hat{\tau},x_{0}-\zeta)-\xi \leq \psi(\hat{\tau},X_{\hat{\tau}^{-}}^{t,x_{0},\alpha,\pi}) -\xi\,.
\end{equation*}
On the set $\{\hat{\tau}=T\}$, we have:
\begin{equation*}
    v(\hat{\tau},X_{\hat{\tau}}^{t,x_{0},\alpha,\pi}) \leq \psi(\hat{\tau},X_{\hat{\tau}^{-}}^{t,x_{0},\alpha,\pi})\,.
\end{equation*}
Let $L^c$ and $I^c$ stand for the continuous part in the decomposition of $L$ and $I$. Applying It\^o's Lemma to $\mathrm{e}^{-q(s-t)} \psi(s,X_{s}^{t,x_{0},\alpha,\pi})$ on $[t,\hat{\tau}]$, we get, 
\begin{align}
    &\mathrm{e}^{-q(\hat{\tau}-t)} v(\hat{\tau},X_{\hat{\tau}}^{t,x_{0},\alpha,\pi})\nonumber\\
    &\leq \mathrm{e}^{-q(\hat{\tau}-t)}\psi(\hat{\tau}^{-},X_{\hat{\tau}^{-}}^{t,x_{0},\alpha,\pi})-\mathrm{e}^{-q(\hat{\tau}-t)}\xi \mathbb{1}_{\{\hat{\tau}=\tau\}}\nonumber\\
    &=\psi(t,x_{0})+\int_{t}^{\hat{\tau}}(-q\mathrm{e}^{-q(s-t)} \psi(s,X_{s}^{t,x_{0},\alpha,\pi})+\mathrm{e}^{-q(s-t)}\psi'(s,X_{s}^{t,x_{0},\alpha,\pi})c(\alpha_{s}))\, \mathrm{d}s\nonumber\\
    &\qquad +\sum_{t\leq s\leq \hat{\tau}}\mathrm{e}^{-q(s-t)}(\psi(s,X_{s}^{t,x_{0},\alpha,\pi})-\psi(s,X_{s^{-}}^{t,x_{0},\alpha,\pi}))\nonumber\\
    &\qquad+\int_{t}^{\hat{\tau}}\mathrm{e}^{-q(s-t)}\psi'(s,X_{s^{-}}^{t,x_{0},\alpha,\pi})(-\mathrm{d} L_{s}^{c}+\mathrm{d} I_{s}^{c})-\mathrm{e}^{-q(\hat{\tau}-t)}\xi \mathbb{1}_{\{\hat{\tau}=\tau\}}\nonumber,
\end{align}
which implies 
\begin{align}\label{ItoSub}
    &\mathrm{e}^{-q(\hat{\tau}-t)} v(\hat{\tau},X_{\hat{\tau}}^{t,x_{0},\alpha,\pi})\nonumber\\
    &\leq \psi(t,x_{0})+\int_{t}^{\hat{\tau}}(-q\mathrm{e}^{-q(s-t)} \psi(t_{i},X_{s}^{t,x_{0},\alpha,\pi})+\mathrm{e}^{-q(s-t)}\psi'(t_{i},X_{s}^{t,x_{0},\alpha,\pi})c(\alpha_{s}))\, \mathrm{d}s\nonumber\\
    &\qquad+\int_{t}^{\hat{\tau}}\int_{\R_+}(\psi(t_{i},X_{s^{-}}^{t,x_{0},\alpha,\pi}-\alpha_{s}z)-\psi(t_{i},X_{s^{-}}^{t,x_{0},\alpha,\pi}))\mu(\mathrm{d}s,\mathrm{d}z)\nonumber\\
    &\qquad+\int_{t}^{\hat{\tau}}\mathrm{e}^{-q(s-t)}\psi'(t_{i},X_{s^{-}}^{t,x_{0},\alpha,\pi})(-\mathrm{d}L_{s}^{c}+\mathrm{d}I_{s}^{c})\nonumber\\
&\qquad+\sum_{ \begin{subarray}{c}
    {L_{s}\ne L_{s-},} \\
    {t\leq s\leq \hat{\tau}}
    \end{subarray}}\mathrm{e}^{-q(s-t)}(\psi(t_{i},X_{s^{-}}^{t,x_{0},\alpha,\pi}-\triangle L_{s})-\psi(t_{i},X_{s^{-}}^{t,x_{0},\alpha,\pi}))\nonumber\\
    &\qquad+\sum_{\begin{subarray}{c}
    {I_{s}\ne I_{s-},} \\
    {t\leq s\leq \hat{\tau}}
    \end{subarray}}\mathrm{e}^{-q(s-t)}(\psi(t_{i},X_{s^{-}}^{t,x_{0},\alpha,\pi}+\triangle I_s)-\psi(t_{i},X_{s^{-}}^{t,x_{0},\alpha,\pi})).
\end{align}
As for all $x\in B(x_{0},\zeta)$ we have
\begin{equation}\label{contratdictionppd1}
    H(t,x,v,\psi'(t,x))  \leq -q\varepsilon,
\end{equation}
\begin{equation}\label{contratdictionppd2}
    1 - \psi'(t,x)  \leq 0\,,
\end{equation}
and
\begin{equation}\label{contratdictionppd3}
    \psi'(t,x) - k  \leq 0\,.
\end{equation}
From inequality (\ref{contratdictionppd1}), it follows that
\begin{align}\label{HSub}
&c(\alpha_{s}) \psi'(t_{i},X_{s}^{t,x_{0},\alpha,\pi})-q\psi(t_{i},X_{s}^{t,x_{0},\alpha,\pi})+\int_{\R_+}(\psi(t_{i},X_{s_{-}}^{t,x_{0},\alpha,\pi}-\alpha_{s}z)-\psi(t_{i},X_{s_{-}}^{t,x_{0},\alpha,\pi}))\nu(\mathrm{d}z)\nonumber\\
&\qquad \leq -q\varepsilon.
\end{align}
Using the mean value theorem and inequalities (\ref{contratdictionppd2}) and (\ref{contratdictionppd3}), we deduce that 
\begin{align}\label{LSub}
    &-\int_{t}^{\hat{\tau}}\mathrm{e}^{-q(s-t)}\psi'(t_{i},X_{s_{-}}^{t,x_{0},\alpha,\pi})\,\mathrm{d}L_{s}^{c}+\sum_{ \begin{subarray}{c}
    {L_{s}\ne L_{s-},} \\
    {t\leq s\leq \hat{\tau}}
    \end{subarray}}\mathrm{e}^{-q(s-t)}(\psi(t_{i},X_{s^{-}}^{t,x_{0},\alpha,\pi}-\triangle L_{s})-\psi(t_{i},X_{s^{-}}^{t,x_{0},\alpha,\pi}))\nonumber\\
    &\qquad \leq -\int_{t}^{\hat{\tau}}\mathrm{e}^{-q(s-t)}\,\mathrm{d}L_{s}
\end{align}
and 
\begin{align}\label{ISub}
    &\int_{t}^{\hat{\tau}}\mathrm{e}^{-q(s-t)}\psi'(t_{i},X_{s_{-}}^{t,x_{0},\alpha,\pi})\,\mathrm{d}I_{s}^{c}+\sum_{\begin{subarray}{c}
    {I_{s}\ne I_{s-},} \\
    {t\leq s\leq \hat{\tau}}
    \end{subarray}}\mathrm{e}^{-q(s-t)}(\psi(t_{i},X_{s^{-}}^{t,x_{0},\alpha,\pi}+\triangle I_s)-\psi(t_{i},X_{s^{-}}^{t,x_{0},\alpha,\pi}))\nonumber\\
    &\qquad \leq \int_{t}^{\hat{\tau}}\mathrm{e}^{-q(s-t)}k\, \mathrm{d}I_{s}\,.
\end{align}
Substituting (\ref{HSub}), (\ref{LSub}) and (\ref{ISub}) into (\ref{ItoSub}), we obtain,
\begin{align}\label{eq:bounds-v}
    &\mathrm{e}^{-q(\hat{\tau}-t)} v(\hat{\tau},X_{\hat{\tau}}^{t,x_{0},\alpha,\pi})\nonumber\\
    &\quad \leq\psi(t,x_{0})-\int_{t}^{\hat{\tau}}\mathrm{e}^{-q(s-t)}(\mathrm{d} L_{s}-k\, \mathrm{d} I_{s})-\int_{t}^{\hat{\tau}}\mathrm{e}^{-q(s-t)} q \varepsilon \, \mathrm{d} s-\mathrm{e}^{-q(\hat{\tau}-t)}\xi \mathbb{1}_{\{\hat{\tau}=\tau\}}.
\end{align}
Taking the expectation in \eqref{eq:bounds-v}, we obtain
\begin{align}\label{cntrappd}
        \psi(t,x_{0}) &\geq \mathbb{E}_{x_{0}}\left[\int_{t}^{\hat{\tau}} \mathrm{e}^{-q(s-t)}(\mathrm{d} L_{s}-k\, \mathrm{d} I_{s})+\mathrm{e}^{-q(\hat{\tau}-t)}v(\hat{\tau},X_{\hat{\tau}}^{t,x_{0},\alpha,\pi})\right]\nonumber\\
        &\qquad +\mathbb{E}_{x_{0}}\left[\int_{t}^{\hat{\tau}}\mathrm{e}^{-q(s-t)} q \varepsilon \, \mathrm{d} s+\mathrm{e}^{-q(\hat{\tau}-t)}\xi \mathbb{1}_{\{\hat{\tau}=\tau\}}\right]\nonumber\\ 
        &\geq \mathbb{E}_{x_{0}}\left[\int_{t}^{\hat{\tau}} \mathrm{e}^{-q(s-t)}(\mathrm{d} L_{s}-k\, \mathrm{d} I_{s})+\mathrm{e}^{-q(\hat{\tau}-t)}v(\hat{\tau},X_{\hat{\tau}}^{t,x_{0},\alpha,\pi})\right]\nonumber\\
        &\qquad +\mathbb{E}_{x_{0}}\left[\int_{t}^{T}\mathrm{e}^{-q(s-t)} q \varepsilon \, \mathrm{d} s{\bf 1}_{\{\hat{\tau}=T\}}+\mathrm{e}^{-q(T-t)}\xi \mathbb{1}_{\{\hat{\tau}=\tau\}}\right]\nonumber\\
        &\geq \mathbb{E}_{x_{0}}\left[\int_{t}^{\hat{\tau}} \mathrm{e}^{-q(s-t)}(\mathrm{d} L_{s}-k\, \mathrm{d} I_{s})+\mathrm{e}^{-q(\hat{\tau}-t)}v(\hat{\tau},X_{\hat{\tau}}^{t,x_{0},\alpha,\pi})\right]\nonumber\\
        &\qquad +\varepsilon \left(1-\mathrm{e}^{-q(T-t)}\right)\mathbb{P}({\hat{\tau}}=T)+\xi\mathrm{e}^{-q(T-t)}\mathbb{P}({\hat{\tau}}=\tau)
\end{align}
As for any $(\alpha, \pi)\in \prod(x)$, we have $\varepsilon \left(1-\mathrm{e}^{-q(T-t)}\right)\mathbb{P}({\hat{\tau}}=\tau)+\xi\mathrm{e}^{-q(T-t)}\mathbb{P}({\hat{\tau}}=T) >0$, we deduce from inequality (\ref{cntrappd}) that 
\begin{align}\label{}
     v(t,x_{0})=\psi(t,x_{0}) > \sup_{(\alpha, \pi)\in \prod(x)}\mathbb{E}_{x_{0}}\left[\int_{t}^{\hat{\tau}} \mathrm{e}^{-q(s-t)}(\mathrm{d}L_{s}-k\, \mathrm{d}I_{s})+\mathrm{e}^{-q(\hat{\tau}-t)}v(\hat{\tau},X_{\hat{\tau}}^{t,x_{0},\alpha,\pi})\right]\nonumber.
\end{align}
which contradicts the Dynamic Programming Principle \eqref{DPP}.\\
For $x\in (-a, 0)$, we prove similarly that the value function is a viscosity solution of the HJB  (\ref{HJB2}).\\

\end{proof}

\subsection{Comparison Principle} 
In the next theorem, we state a comparison principle
from which follows the uniqueness of the viscosity solution to \eqref{HJB1}-\eqref{HJB2} on $(-a,0)\cup(0, \infty)$. 
\begin{theorem}\label{THC} 
We fix $t\geq 0$. Let $v_{1}(t, \cdot)$ and $v_{2}(t, \cdot)$ be respectively a viscosity super-solution and a viscosity sub-solution of (\ref{HJB1}) on $(0,\infty)$. We assume that $v_1$ and $v_2$ are uniformly continuous, satisfy a linear growth condition and 
\begin{eqnarray}\label{civ1v2}
v_{1}(t,0)\geq v_{2}(t,0).
\end{eqnarray}
Then it holds 
\begin{equation*}
    v_{1}(t,x)\geq v_{2}(t,x),\ \ \ \forall x\in (0, \infty)\,.
\end{equation*}
\end{theorem}
\begin{proof}
For $\varepsilon, \beta>0$, we define
$\Psi:[0,\infty)^{3}\rightarrow\mathbb{R}\cup\{-\infty\}$ as
\begin{equation*}
    \Psi(t,x,y)=v_{2}(t,x)-v_{1}(t,y)-\frac{1}{\varepsilon}|x-y|^{2}-\beta(|x|^{2}+|y|^{2})\,.
\end{equation*}
Since $\Psi(t, \cdot, \cdot)$ is continuous with respect to the second and third variables and $\lim_{|x|,|y|\rightarrow \infty} \Psi(t,x,y)=-\infty$, then $\Psi$ has a global maximum at the point $(t,x_{0},y_{0}) \in [0,\infty)^{3} $ and it holds
\begin{equation}\label{psimax}
    \Psi(t,x_{0},y_{0})\geq \Psi(t,x,y)\,, \ \ \forall  x,y \in [0,\infty)^{2}.
\end{equation}
In particular, $\Psi(t,x_{0},x_{0})+\Psi(t,y_{0},y_{0})\leq 2\Psi(t,x_{0},y_{0}).$ Hence 
\begin{equation}\label{eq:bound1}
\frac{2}{\varepsilon}|x_{0}-y_{0}|^{2}\leq v_{2}(t,x_{0})-v_{2}(t,y_{0})+v_{1}(t,x_{0})-v_{1}(t,y_{0}).
\end{equation}
From \eqref{eq:bound1} and the fact that $v_{1}(t, \cdot)$ and $v_{2}(t, \cdot)$ are uniformly continuous, we infer
\begin{equation}\label{majeps}
    |x_{0}-y_{0}|\leq k'\sqrt{\varepsilon}, 
\end{equation}
for $k'$ being a positive constant. Taking $x=y=0$ in (\ref{psimax}), we get
$$\beta(|x_{0}|^{2}+|y_{0}|^{2})\leq C (|x_{0}|+|y_{0}|).$$
From the latter we deduce that there exists a positive constant $C_{0}$ independent of $\beta$ such that 
\begin{equation}
    \beta|x_{0}|\leq C_{0}\,, \quad \mbox{   and   } \quad  \beta|y_{0}|\leq C_{0}.
\end{equation}
Since $x\rightarrow \Psi(t,x,y_{0}) $ takes its maximum at $x=x_{0}$, we get
\begin{equation*}
    v_{2}(t,x_{0})-\frac{1}{\varepsilon}|x_{0}-y_{0}|^{2}-\beta|x_{0}|^{2}\geq v_{2}(t,x)-\frac{1}{\varepsilon}|x-y_{0}|^{2}-\beta|x|^{2}.
\end{equation*}
We put $\phi_{2}(t,x)=\frac{1}{\varepsilon}|x-y_{0}|^{2}+\beta|x|^{2}$. It holds $\phi_{2}(t, \cdot) \in C^1((0,\infty))$, for all $t\geq 0$ and we have
\begin{equation*}
    (v_{2}-\phi_{2})(t,x_{0})=\max_{x \in [0, \infty)}(v_{2}-\phi_{2})(t,x).
\end{equation*}
If $x_{0}=0$, we have $\Psi(t,x,x)\leq \Psi(t,0,y_{0})$
\begin{equation*}
    v_{2}(t,x)-v_{1}(t,x)-2\beta|x|^{2}\leq v_2(t,0)-v_{1}(t,y_{0})-\frac{1}{\varepsilon}|y_{0}|^{2}-\beta|y_{0}|^{2}\leq v_2(t,0)-v_{1}(t,y_{0}).
\end{equation*}
Sending $\varepsilon$ to $0$ and using inequality (\ref{majeps}), the continuity of $v_1$ and inequality (\ref{civ1v2}),
we deduce that 
\begin{equation*}
    v_{2}(t,x)-v_{1}(t,x)-2\beta|x|^{2}\leq 0.
\end{equation*}
Taking $\beta \downarrow 0$, one gets
\begin{equation}\label{eq:inequality-v_2-smaller}
    v_{2}(t,x)\leq v_{1}(t,x), \ \ \ \forall x\geq 0\,.
\end{equation}
If $y_{0}=0$ or $x_{0}=y_{0}=0$, similar derivations yield again the inequality \eqref{eq:inequality-v_2-smaller}.\\
It remains to study the case when $x_{0}\ne 0$ and $y_{0}\ne 0$. 
By the  definition of viscosity sub-solution of (\ref{HJB1}), we have 
\begin{equation}\label{HJBsubs}
\max\{H(t,x_{0},v_{2},\phi_{2}'(t,x_{0})), 1 - \phi_{2}'(t,x_{0}), \phi_{2}'(t,x_{0})-k\}  \geq 0.
\end{equation}
Likewise, Since $y\rightarrow \Psi(t,x_{0},y) $ takes its maximum at $y=y_{0}$, we have
\begin{equation*}
    v_{1}(t,y_{0})+\frac{1}{\varepsilon}|x_{0}-y_{0}|^{2}+\beta|y_{0}|^{2}\leq v_{1}(t,y)+\frac{1}{\varepsilon}|x_{0}-y|^{2}+\beta|y|^{2}.
\end{equation*}
For $t\geq 0$, we put $\phi_{1}(t,y)=-\frac{1}{\varepsilon}|x_{0}-y|^{2}-\beta|y|^{2}$. Then $\phi_{1}(t, \cdot)$ is $C^{1}((0,\infty))$ and  
\begin{equation*}
    (v_{1}-\phi_{1})(t,y_{0})=\min_{y\in (0,\infty)}(v_{1}-\phi_{1})(t,y)\,.
\end{equation*}
By the  definition of viscosity super-solution of (\ref{HJB1}), we have 
\begin{equation}\label{HJBsupers}
\max\{H(t,y_{0},v_{1},\phi_{1}'(t,y_{0})), 1 - \phi_{1}'(t,y_{0}),  \phi_{1}'(t,y_{0})-k\}  \leq 0.
\end{equation} 
Inequalities \eqref{HJBsubs} and \eqref{HJBsupers} imply
\begin{align*}
    &\max\{H(t,y_{0},v_{1},\phi_{1}'(t,y_{0})), 1 - \phi_{1}'(t,y_{0}),  \phi_{1}'(t,y_{0})-k\}\\
    &\qquad -\max\{H(t,x_{0},v_{2},\phi_{2}'(t,x_{0})), 1 - \phi_{2}'(t,x_{0}),  \phi_{2}'(t,x_{0})-k\}  \leq 0.
\end{align*}
Observing that $\max\{a_{1},a_{2}, a_{3}\} -\max\{b_{1},b_{2}, b_{3}\}  \leq 0$, implies $\exists \  j \in \{1,2,3\}$, such that $\forall i \in \{1,2,3\}$, we have $a_{i}\leq b_{j}$, which could be formulated by, $ \exists j \in \{1,2,3\},$ such that $b_{j}=\max\{a_{1},a_{2}, a_{3}, b_{1},b_{2}, b_{3}\}$.

\noindent First, notice that $\phi_{2}'(t,x_{0})-k$, can not be the maximum, because if it was, we would have:
$\phi_{2}'(t,x_{0})-k-(1 - \phi_{1}'(t,y_{0}))\geq 0$, which implies
$(2/\varepsilon +\beta)(x_{0}-y_{0})\geq (k+1)/2$.
From inequality (\ref{majeps}), we obtain $(2/\varepsilon +\beta)k^{'}\sqrt{\varepsilon}\geq (k+1)/2$. Taking $\beta= \frac{1}{\varepsilon} $ and sending $\varepsilon \to \infty$, we get  $0\geq (k+1)/2$, which is absurd. Hence we only have two cases: 
\begin{enumerate}
    \item[(i)] The maximum is $1 - \phi_{2}'(t,x_{0})$. In this case we have $1 - \phi_{1}'(t,y_{0})-(1 - \phi_{2}'(t,x_{0}))\leq 0$, which implies 
    $2\beta(x_{0}+y_{0})\leq 0$ and we come back to the case $x_{0}=y_{0}=0$, deducing
    \begin{equation}
        v_{2}(t,x)\leq v_{1}(t,x), \ \ \ \forall t\geq 0,\ \ \ \forall x\in (0,\infty)\,.
    \end{equation}
   \item[(ii)] The maximum is $H(t,x_{0},v_{2},\phi_{2}'(t,x_{0}))$. In this case we have\\ $$H(t,y_{0},v_{1},\phi_{1}'(t,y_{0}))-H(t,x_{0},v_{2},\phi_{2}'(t,x_{0}))\leq 0,$$
which implies 
\begin{align*}
    &\sup_{\alpha\in [\underline{\alpha}, 1]}\left[c(\alpha)\phi_{2}'(t,x_{0}) + \lambda\int_{\mathbb{R}_+} v_{2}(t,x_{0}- \alpha u)\, \mathrm{d} F(u) - (q+\lambda)v_{2}(t,x_{0})\right.\\
    &\qquad -\left.\left(c(\alpha)\phi_{1}'(t,y_{0}) + \lambda\int_{\mathbb{R}_+} v_{1}(t,y_{0}- \alpha u)\, \mathrm{d} F(u) - (q+\lambda)v_{1}(t,y_{0})\right)\right]\\
    &\geq \sup_{\alpha\in [\underline{\alpha}, 1]}\left[c(\alpha)\phi_{2}'(t,x_{0}) + \lambda\int_{\mathbb{R}_+} v_{2}(t,x_{0}- \alpha u)\, \mathrm{d}F(u) - (q+\lambda)v_{2}(t,x_{0})\right]\\
    &\qquad -\sup_{\alpha\in [\underline{\alpha}, 1]}\left[c(\alpha)\phi_{1}'(t,y_{0}) + \lambda\int_{\mathbb{R}_+} v_{1}(t,y_{0}- \alpha u)\, \mathrm{d}F(u) - (q+\lambda)v_{1}(t,y_{0})\right]\\
    &\geq 0\,.
\end{align*}
From the latter we deduce
\begin{align*}
 &   q(v_{2}(t,x_{0}) - v_{1}(t,y_{0})) \leq\sup_{\alpha\in [\underline{\alpha}, 1]}\left[c(\alpha)(\phi_{2}'(t,x_{0})-\phi_{1}'(t,y_{0}))\right. \\
  &  +\left. \lambda\int_{\mathbb{R}_+} (v_{2}(t,x_{0}- \alpha u)-v_{1}(t,y_{0}- \alpha u)-v_{2}(t,x_{0})+v_{1}(t,y_{0}))\, \mathrm{d}F(u)\right].
\end{align*}
Using the fact that $(x,y) \rightarrow \Psi(t, x, y)$ takes its maximum at $(x_0,y_0)$, we infer 
\begin{equation*}
    v_{2}(t,x_{0}) - v_{1}(t,y_{0})\leq\frac{1}{q}\sup_{\alpha\in [\underline{\alpha}, 1]}\left[2\beta c(\alpha)(x_{0}+y_{0}) + 2\lambda \beta\alpha^{2}\mathbb{E}(U^{2})-2\lambda\beta\alpha(x_{0}+y_{0}) \mathbb{E}(U) \right].
\end{equation*}
Recalling the expression of $c(\alpha)$ in \eqref{eq:c-alpha}, it holds
\begin{equation*}
    v_{2}(t,x_{0}) - v_{1}(t,y_{0})\leq\frac{2\beta \lambda}{q}\left[ \mathbb{E}(U)(1+\eta_{1})(x_{0}+y_{0}) + \mathbb{E}(U^{2}) \right].
\end{equation*}
Moreover, observing that $\Psi(t,x,x)\leq \Psi(t,x_{0},y_{0})$, for all $t\geq 0$, it follows
\begin{align*}
   & v_{2}(t,x)-v_{1}(t,x)-2\beta|x|^{2}\\
    &\quad \leq\frac{2\beta \lambda}{q}\left[ \mathbb{E}(U)(1+\eta_{1})(x_{0}+y_{0}) + \mathbb{E}(U^{2}) \right]- \beta(|x_{0}|^{2}+|y_{0}|^{2})\\
    &\quad \leq \beta|x_{0}|\left(\frac{2 \lambda\mathbb{E}(U)}{q}(1+\eta_{1})-|x_{0}|\right)+\beta|y_{0}|\left(\frac{2 \lambda\mathbb{E}(U)}{q}(1+\eta_{1})-|y_{0}|\right)+\frac{2\beta \lambda}{q}\mathbb{E}(U^{2}).
\end{align*}
We distinguish three cases:
\begin{enumerate}
    \item $x_{0}>\frac{2 \lambda\mathbb{E}(U)}{q}(1+\eta_{1})$ and $y_{0}>\frac{2 \lambda\mathbb{E}(U)}{q}(1+\eta_{1})$,
    \begin{equation*}
    v_{2}(t,x)-v_{1}(t,x)-2\beta|x|^{2}\leq \frac{2\beta \lambda}{q}\mathbb{E}(U^{2}).
\end{equation*}
    Passing $\beta \downarrow 0$, one gets
    \begin{equation*}
    v_{2}(t,x)\leq v_{1}(t,x), \ \ \ \forall t\geq 0,\ \ \ \forall x\geq 0.
    \end{equation*} 
    \item $x_{0}\leq \frac{2 \lambda\mathbb{E}(U)}{q}(1+\eta_{1})$ and $y_{0}\leq \frac{2 \lambda\mathbb{E}(U)}{q}(1+\eta_{1})$,
    \begin{equation*}
    v_{2}(t,x)-v_{1}(t,x)-2\beta|x|^{2}\leq 2\beta\left(\frac{2 \lambda\mathbb{E}(U)}{q}(1+\eta_{1})\right)^{2}+\frac{2\beta \lambda}{q}\mathbb{E}(U^{2}).
\end{equation*}
    Passing $\beta \downarrow 0$, one gets
    \begin{equation*}
    v_{2}(t,x)\leq v_{1}(t,x), \ \ \ \forall t\geq 0,\ \ \ \forall x\geq 0.
    \end{equation*}
    \item $x_{0}>\frac{2 \lambda\mathbb{E}(U)}{q}(1+\eta_{1})$ and $y_{0}\leq\frac{2 \lambda\mathbb{E}(U)}{q}(1+\eta_{1})$,
    \begin{equation*}
    v_{2}(t,x)-v_{1}(t,x)-2\beta|x|^{2}\leq \beta\left(\frac{2 \lambda\mathbb{E}(U)}{q}(1+\eta_{1})\right)^{2}+\frac{2\beta \lambda}{q}\mathbb{E}(U^{2}).
    \end{equation*}
    Passing $\beta \downarrow 0$, one gets
    \begin{equation*}
    v_{2}(t,x)\leq v_{1}(t,x), \ \ \ \forall t\geq 0,\ \ \ \forall x\geq 0
    \end{equation*}
\end{enumerate}
and the statement of the proposition follows.\\

\end{enumerate}
\end{proof}
\noindent The following theorem shows the comparison principle on $(-a,0)$.
\begin{theorem}\label{THC2} 
We fix $t\geq 0$. Let $v_{1}(t, \cdot)$ and $v_{2}(t, \cdot)$ be respectively a viscosity super-solution and a viscosity sub-solution of (\ref{HJB2}) on $(-a,0)$. We assume that $v_1$ and $v_2$ are uniformly continuous and  
\begin{eqnarray}\label{civ1v22}
v_{1}(t,0)\geq v_{2}(t,0)\mbox{ and }v_{1}(t,-a)\geq v_{2}(t,-a),
\end{eqnarray}
Then it holds 
\begin{equation*}
    v_{1}(t,x)\geq v_{2}(t,x), \forall x\in (-a, 0)\,.
\end{equation*}
\end{theorem}

\section{Explicit expressions for optimal policies, for the Cram\'er-Lundberg process with exponential jumps} \label{sec:Guess} 
In this section we derive through the {\it guess-and-verify} method, an explicit expression for the value function associated to $(\alpha, a, b)$ 
policies consisting in proportional reinsurance, injecting capital up till the level $0$, provided that the severity of ruin does not reach $a \geq 0$ and paying dividends as soon as the process reaches some upper level $b \geq 0$. To derive our results, we adapt the computations presented in \cite[Section 3]{avram2021equity} to our setting. Notice that in our case $\alpha$ is piece-wise constant and the aim is to calculate, for $i=1,2,\ldots$, $\alpha_t=\alpha_{t_{i}}$ at each time step $t\in [t_i, t_{i+1})$. 
\\
Let $X_s^{t,x,\alpha}$, $t\geq s\geq  0$ be the Cramér-Lundberg process as defined in (\ref{Xalpha}). Since the jumps in this model are non-positive, we know that the moment generating function is well defined for $\theta \in \R_+$ as
\begin{equation}\label{k}
    \kappa(t_{i},\theta) := \log \mathbb{E}_{x_{i}}[\mathrm{e}^{\theta (X_{(t_{i+1})^-}^{t_i,x_i,\alpha}-X_{t_{i}}^{t_i,x_i,\alpha})}]= (t_{i+1}-t_i)\left[c_{\alpha_{t_{i}}}\theta +\int_{0}^{\infty}(\mathrm{e}^{-\alpha_{t_{i}}\theta z}-1) \lambda \,F(\mathrm{d}z)\right]\,,
\end{equation}
where $c_{\alpha_{t_{i}}}=c - (1-\alpha_{t_{i}}) (1+\eta_2)\lambda \mu\,.$ From now on we assume $t_{i+1}-t_i=1$, $\forall i \in \mathbb{N}$ and
exponential jump-sizes with mean $1/\mu$, jump rate $\lambda$ and premium rate $c_{\alpha_{t_{i}}} > 0$. The  Laplace
exponent in this case is given by 
$$ \kappa(t_{i},\theta) = \left(\theta c_{\alpha_{t_{i}}}-\frac{\alpha_{t_{i}}\lambda \theta}{\mu+\alpha_{t_{i}}\theta}\right)\,.$$ 
Solving $\kappa(t_{i},\theta) -q = 0$, for $\theta$ yields
$$\alpha_{t_{i}} c_{\alpha_{t_{i}}}\theta^{2}+(c_{\alpha_{t_{i}}}\mu -\alpha_{t_{i}}\lambda-\alpha_{t_{i}}q)\theta-\mu q=0,$$
and we get two distinct solutions $\rho_{-}\leq 0\leq \rho_{+}=\Phi_{q}$ given by
\begin{equation}\label{eq:rho}
\rho_{\pm}=\frac{1}{2 \alpha_{t_{i}}c_{\alpha_{t_{i}}}}\left(-(c_{\alpha_{t_{i}}}\mu -\alpha_{t_{i}}\lambda-\alpha_{t_{i}}q)\pm\sqrt{(c_{\alpha_{t_{i}}}\mu -\alpha_{t_{i}}\lambda-\alpha_{t_{i}}q)^{2}+4\alpha_{t_{i}}c_{\alpha_{t_{i}}}\mu q}\right)\,.
\end{equation}
We gather first some results in the following proposition that are useful to derive the explicit expression of the value function. 
\begin{Proposition}\label{Prop4.1}
Let $\mathcal{L}^{-1}$ be the inverse Laplace transform of $\theta \rightarrow (\kappa(t_{i},\theta)-q)^{-1}$, where $\kappa(t_{i},\theta)$ is  as defined in \eqref{k}. For $ t \in [t_{i},t_{i+1})$ we define $W_q:\R_+^2 \rightarrow \R$ and $Z_q: \R_+^2\rightarrow \R$ respectively as 
\begin{equation}\label{eq:Wq}
W_{q}(t,x):=\mathcal{L}^{-1}\left\{\frac{1}{ \kappa(t_{i},s) -q}\right\}=\frac{(\alpha_{t_{i}}\Phi_{q}+\mu)\mathrm{e}^{\Phi_{q}x}-(\alpha_{t_{i}}\rho_{-}+\mu)\mathrm{e}^{\rho_{-}x}}{\alpha_{t_{i}}c_{\alpha_{t_{i}}}(\Phi_{q}-\rho_{-})}
\end{equation}
and 
$Z_{q}(t,x):=1+q\int_{0}^{x}W_{q}(t,y)\,\mathrm{d}y$. Moreover, Let $\mathcal{G}: C^1(\R_+^{2}) \rightarrow \R$ be defined as
    \begin{equation}\label{eq41}
        \mathcal{G}\phi(t_{i},y) =c_{\alpha_{t_{i}}}\phi'(t_{i},y)+\lambda \int_{\mathbb{R}_{+}} \left(\phi(t_{i},y-z)-\phi(t_{i},y)\right)\, \mathrm{d}F(\frac{z}{\alpha_{t_{i}}})-q\phi(t_{i},y)\,.
    \end{equation}
It holds 
\begin{enumerate}
    \item $\Phi_{q}+\rho_{-}=-\frac{c_{\alpha_{t_{i}}}\mu-\alpha_{t_{i}}\lambda-\alpha_{t_{i}}q}{\alpha_{t_{i}}c_{\alpha_{t_{i}}}};\qquad  \Phi_{q}\rho_{-}=-\frac{\mu q}{\alpha_{t_{i}}c_{\alpha_{t_{i}}}}.$
    \item $c_{\alpha_{t_{i}}}\rho(\alpha_{t_{i}}\rho+\mu)=\alpha_{t_{i}}(\lambda+q)\rho+\mu q, \qquad \rho \in \{\Phi_{q}, \rho_{-}\}$\,. \label{asser-2}
    \item \label{item:3} $c_{\alpha_{t_{i}}}W_{q}(t,x)-Z_{q}(t,x)=\frac{\lambda}{c_{\alpha_{t_{i}}}(\Phi_{q}-\rho_{-})}(\mathrm{e}^{\Phi_{q}x}-\mathrm{e}^{\rho_{-}x})\geq 0\,, \qquad \forall x\geq 0.$
    \item The functions $W_{q}$ and $Z_{q}$ (extended by $W_{q}(t,x)=0, Z_{q}(t,x) = 1, \forall x < 0$) satisfy the equation $\mathcal{L}\phi(t_{i},b) = 0$, for all $b \geq 0$, i.e.,
    \begin{equation}\label{eq:Wq-generator}
    c_{\alpha_{t_{i}}}W_{q}'(t,b)+\lambda \int_{0}^{b} W_{q}(t,y)\frac{\mu}{\alpha_{t_{i}}} \mathrm{e}^{-\frac{\mu}{\alpha_{t_{i}}}(b-y)}\,\mathrm{d}y-(\lambda+q)W_{q}(t,b)=0\,,
    \end{equation}
    \begin{equation}\label{eq:Zq-generator}
    c_{\alpha_{t_{i}}}Z_{q}'(t,b)+\lambda \int_{-\infty}^{b} Z_{q}(t,y)\frac{\mu}{\alpha_{t_{i}}} \mathrm{e}^{-\frac{\mu}{\alpha_{t_{i}}}(b-y)}\,\mathrm{d}y-(\lambda+q)Z_{q}(t,b)=0.
    \end{equation}
\end{enumerate}
\end{Proposition}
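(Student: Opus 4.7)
The plan is to establish the four assertions in the stated order, since assertions 1 and 2 supply the algebraic identities that drive 3 and 4. Assertion 1 follows immediately from Vieta's formulas applied to the quadratic $\alpha_{t_{i}} c_{\alpha_{t_{i}}}\theta^{2}+(c_{\alpha_{t_{i}}}\mu-\alpha_{t_{i}}\lambda-\alpha_{t_{i}}q)\theta-\mu q = 0$, whose two roots are exactly $\Phi_q$ and $\rho_-$ by \eqref{eq:rho}. Assertion 2 is obtained by clearing the denominator in $\kappa(t_{i},\rho)-q=0$: multiplying through by $(\mu+\alpha_{t_{i}}\rho)$, expanding, and rearranging yields the stated identity, which is valid for either root.

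For assertion 3 I would compute $Z_q$ explicitly by integrating the closed form \eqref{eq:Wq} of $W_q$ term by term from $0$ to $x$; this writes $Z_q$ as an affine combination of $1$, $\mathrm{e}^{\Phi_q x}$, and $\mathrm{e}^{\rho_- x}$. Forming $c_{\alpha_{t_{i}}}W_q(t,x)-Z_q(t,x)$, the constant terms cancel through the relation $\Phi_q\rho_- = -\mu q/(\alpha_{t_{i}} c_{\alpha_{t_{i}}})$ from assertion 1, while the coefficients of the exponentials simplify to the claimed $\frac{\lambda}{c_{\alpha_{t_{i}}}(\Phi_q-\rho_-)}$ after using that $(\alpha_{t_{i}}\rho + \mu)/\rho$ and $1/\rho$ combine cleanly on account of the same Vieta relation. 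Nonnegativity is then immediate from $\rho_-\le 0\le \Phi_q$ and monotonicity of the exponential.

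For assertion 4 I would verify \eqref{eq:Wq-generator} by direct substitution. The key building block is the convolution
\[
\int_{0}^{b}\mathrm{e}^{\rho y}\,\frac{\mu}{\alpha_{t_{i}}}\,\mathrm{e}^{-\frac{\mu}{\alpha_{t_{i}}}(b-y)}\,\mathrm{d}y \;=\; \frac{\mu/\alpha_{t_{i}}}{\rho+\mu/\alpha_{t_{i}}}\bigl(\mathrm{e}^{\rho b}-\mathrm{e}^{-\mu b/\alpha_{t_{i}}}\bigr),\qquad \rho\in\{\Phi_q,\rho_-\},
\]
which reduces the left-hand side of \eqref{eq:Wq-generator} to a linear combination of $\mathrm{e}^{\Phi_q b}$, $\mathrm{e}^{\rho_- b}$, and $\mathrm{e}^{-\mu b/\alpha_{t_{i}}}$. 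After multiplying the coefficient of $\mathrm{e}^{\rho b}$ by $(\alpha_{t_{i}}\rho+\mu)$, the resulting expression $c_{\alpha_{t_{i}}}\rho(\alpha_{t_{i}}\rho+\mu)+\lambda\mu-(\lambda+q)(\alpha_{t_{i}}\rho+\mu)$ collapses to $0$ by assertion \ref{asser-2}, applied separately at $\rho=\Phi_q$ and $\rho=\rho_-$. The residual coefficient of $\mathrm{e}^{-\mu b/\alpha_{t_{i}}}$ then vanishes upon invoking the product identity $\Phi_q\rho_-=-\mu q/(\alpha_{t_{i}} c_{\alpha_{t_{i}}})$ from assertion 1. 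For \eqref{eq:Zq-generator} I would use $Z_q'(t,b)=qW_q(t,b)$, split the integral into $(-\infty,0]$—which simply contributes $\mathrm{e}^{-\mu b/\alpha_{t_{i}}}$—and $(0,b]$, then substitute $Z_q(t,y)=1+q\int_{0}^{y}W_q(t,u)\,\mathrm{d}u$ and apply Fubini; the residual reduces via item \ref{item:3} and the convolution just computed to an already-verified identity.

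The main obstacle is the bookkeeping in the verification of \eqref{eq:Wq-generator}: the challenge is organising the substitution so that the factors $\alpha_{t_{i}}\rho+\mu$ group in a way that lets assertion \ref{asser-2} be applied cleanly at each root, and so that the leftover $\mathrm{e}^{-\mu b/\alpha_{t_{i}}}$ contribution cancels exactly via assertion 1. Once those two identities are treated as algebraic black boxes, the remainder of the proof is essentially mechanical.
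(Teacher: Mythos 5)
Your proposal is correct and takes essentially the same route as the paper: Vieta's relations for items 1--2 and direct substitution of the explicit exponential form of $W_q$ into the integro-differential operator for items 3--4. Two small organizational differences are worth noting. For item 2 you clear denominators in $\kappa(t_i,\rho)-q=0$ by multiplying through by $\mu+\alpha_{t_i}\rho$, whereas the paper instead expands $c_{\alpha_{t_i}}\Phi_q(\alpha_{t_i}\Phi_q+\mu)$ and substitutes the explicit radical expression for $\Phi_q^2$; your version is a shade cleaner but the content is the same. For the $Z_q$ half of item 4 the paper simply asserts it ``follows similarly,'' while you spell out a Fubini reduction that boils \eqref{eq:Zq-generator} down to $q\bigl[c_{\alpha_{t_i}}W_q-Z_q-\lambda\int_0^b W_q(t,u)\mathrm{e}^{-\mu(b-u)/\alpha_{t_i}}\,\mathrm{d}u\bigr]=0$, which is then item 3 plus the convolution you already computed for the $W_q$ half --- a nice, slightly tighter way to close the loop. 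One small overstatement: in the $W_q$ verification, the coefficient of $\mathrm{e}^{-\mu b/\alpha_{t_i}}$ does not in fact need the product relation $\Phi_q\rho_-=-\mu q/(\alpha_{t_i}c_{\alpha_{t_i}})$; the factors $\alpha_{t_i}\rho+\mu$ in the numerator of $W_q$ and the denominator $\rho+\mu/\alpha_{t_i}$ from the convolution cancel identically, so that residual vanishes for free. This does not affect the validity of the argument.
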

\begin{proof}
\begin{enumerate}
\item A mere consequence of the definition of $\Phi_{q}, \ \rho_{-}$. \label{item-1}
\item It holds \label{item-2}
\begin{align*}
&  c_{\alpha_{t_{i}}}\Phi_{q}(\alpha_{t_{i}}\Phi_{q}+\mu)\\
&\quad =\alpha_{t_{i}}c_{\alpha_{t_{i}}}\Phi_{q}^{2}+c_{\alpha_{t_{i}}}\mu\Phi_{q}\\
    &\quad = \frac{(c_{\alpha_{t_{i}}}\mu -\alpha_{t_{i}}\lambda-\alpha_{t_{i}}q)}{2 \alpha_{t_{i}}c_{\alpha_{t_{i}}}}\left(c_{\alpha_{t_{i}}}\mu -\alpha_{t_{i}}\lambda-\alpha_{t_{i}}q -\sqrt{(c_{\alpha_{t_{i}}}\mu -\alpha_{t_{i}}\lambda-\alpha_{t_{i}}q)^{2}+4\alpha_{t_{i}}c_{\alpha_{t_{i}}}\mu q}\right)\\
    &\qquad \quad +\mu q+c_{\alpha_{t_{i}}}\mu\Phi_{q} \\
    &\quad =\alpha_{t_{i}}(\lambda+q)\Phi_{q}-c_{\alpha_{t_{i}}}\mu\Phi_{q}+c_{\alpha_{t_{i}}}\mu\Phi_{q}+\mu q 
   \\
    &\quad =\alpha_{t_i}^{*}(\lambda+q)\Phi_{q}+\mu q.  
\end{align*}
We deduce the equality for $\rho_-$ similarly.
\item We deduce from assertions \ref{item-1} and \ref{item-2},
\begin{align*}
    &c_{\alpha_{t_{i}}}W_{q}(t,x)-Z_{q}(t,x)=\frac{1}{c_{\alpha_{t_{i}}}(\Phi_{q}-\rho_{-})}\left(c_{\alpha_{t_{i}}}(\alpha_{t_{i}}\Phi_{q}+\mu)\mathrm{e}^{\Phi_{q}x}-c_{\alpha_{t_{i}}}(\alpha_{t_{i}}\rho_{-}+\mu)\mathrm{e}^{\rho_{-}x}\right.\\
& \left. -q\frac{(\alpha_{t_{i}}\Phi_{q}+\mu)(\mathrm{e}^{\Phi_{q}x}-1)}{\Phi_{q}}+q\frac{(\alpha_{t_{i}}\rho_{-}+\mu)(\mathrm{e}^{\rho_{-}x}-1)}{\rho_{-}} \right)-1\\
    &=\frac{\lambda}{c_{\alpha_{t_{i}}}(\Phi_{q}-\rho_{-})}(\mathrm{e}^{\Phi_{q}x}-\mathrm{e}^{\rho_{-}x}).
 \end{align*}
\item  We write $W_{q}(t,x)=\frac{\alpha_{t_{i}}\Phi_{q}+\mu}{\alpha_{t_{i}}c_{\alpha_{t_{i}}}(\Phi_{q}-\rho_{-})}\mathrm{e}^{\Phi_{q}x}-\frac{\alpha_{t_{i}}\rho_{-}+\mu}{\alpha_{t_{i}}c_{\alpha_{t_{i}}}(\Phi_{q}-\rho_{-})}\mathrm{e}^{\rho_{-}x}$. Then we compute, using the previous assertions,
\begin{align*}
c_{\alpha_{t_{i}}}W_{q}'(t,x)&=\frac{c_{\alpha_{t_{i}}}\Phi_{q}(\alpha_{t_{i}}\Phi_{q}+\mu)}{\alpha_{t_{i}}c_{\alpha_{t_{i}}}(\Phi_{q}-\rho_{-})}\mathrm{e}^{\Phi_{q}x}-\frac{c_{\alpha_{t_{i}}}\rho_{-}(\alpha_{t_{i}}\rho_{-}+\mu)}{\alpha_{t_{i}}c_{\alpha_{t_{i}}}(\Phi_{q}-\rho_{-})}\mathrm{e}^{\rho_{-}x}\\
    &=\frac{\alpha_{t_{i}}(\lambda+q)\Phi_{q}+\mu q}{\alpha_{t_{i}}c_{\alpha_{t_{i}}}(\Phi_{q}-\rho_{-})}\mathrm{e}^{\Phi_{q}x}-\frac{\alpha_{t_{i}}(\lambda+q)\rho_{-}+\mu q}{\alpha_{t_{i}}c_{\alpha_{t_{i}}}(\Phi_{q}-\rho_{-})}\mathrm{e}^{\rho_{-}x},
\end{align*}
\begin{align*}
\lambda \int_{0}^{\infty} W_{q}(t,x-\alpha_{t_{i}}y)\mu \mathrm{e}^{-\mu y}\, \mathrm{d}y&=\lambda\mu \mathrm{e}^{-\frac{\mu x}{\alpha_{t_{i}}}} \int_{0}^{\frac{x}{\alpha_{t_{i}}}} W_{q}(t,\alpha_{t_{i}}y) \mathrm{e}^{\mu y}\,\mathrm{d}y\\
    &=\frac{\lambda\mu}{\alpha_{t_{i}}c_{\alpha_{t_{i}}}(\Phi_{q}-\rho_{-})}(\mathrm{e}^{\Phi_{q}x}-\mathrm{e}^{\rho_{-}x}),
\end{align*}
and 
\begin{align*}
  -(\lambda+q)W_{q}(t,x)&=-(\lambda+q)\frac{\alpha_{t_{i}}\Phi_{q}+\mu}{\alpha_{t_{i}}c_{\alpha_{t_{i}}}(\Phi_{q}-\rho_{-})}\mathrm{e}^{\Phi_{q}x}+(\lambda+q)\frac{\alpha_{t_{i}}\rho_{-}+\mu}{\alpha_{t_{i}}c_{\alpha_{t_{i}}}(\Phi_{q}-\rho_{-})}\mathrm{e}^{\rho_{-}x}\\
    &=\frac{-(\lambda+q)\alpha_{t_{i}}\Phi_{q}}{\alpha_{t_{i}}c_{\alpha_{t_{i}}}(\Phi_{q}-\rho_{-})}\mathrm{e}^{\Phi_{q}x}+\frac{(\lambda+q)\alpha_{t_{i}}\rho_{-}}{\alpha_{t_{i}}c_{\alpha_{t_{i}}}(\Phi_{q}-\rho_{-})}\mathrm{e}^{\rho_{-}x}\\
    &\qquad +\frac{-(\lambda+q)\mu}{\alpha_{t_{i}}c_{\alpha_{t_{i}}}(\Phi_{q}-\rho_{-})}(\mathrm{e}^{\Phi_{q}x}-\mathrm{e}^{\rho_{-}x}),
\end{align*}
which implies equation \ref{eq:Wq-generator} for $W_q$. Equation \ref{eq:Zq-generator} for $Z_{q}$ follows similarly.
\end{enumerate}
\end{proof}
\subsection{Explicit-expression for the optimal value function}
\noindent In the following proposition, we derive an explicit expression for the value function in terms of functions $S, G:\R_+\times \R_+ \rightarrow \R$ that we define for $t \in [t_i, t_{i+1})$ and $s\in \R_+$ as follows 
\begin{equation}
\left \{
\begin{array}{rcl}
S(s,x)&=& \mathrm{e}^{-\frac{\mu}{\alpha_{t_{i}}} s}C_{q}(x)+Z_{q}(t,x)\,, \\
G(s,x) &=& m_{\alpha_{t_{i}}}(s)C_{q}(x)\,,
\end{array}
\right.
\end{equation}
where
\begin{equation}\label{eq:Cq}
    C_{q}(x)=c_{\alpha_{t_{i}}}W_{q}(t,x)-Z_{q}(t,x)\,, \ \ \forall \ x\geq 0
\end{equation}
and 
\begin{equation}\label{eq:mr}
    m_{r}(s)=\int_{0}^{s}y\mu_{r} \mathrm{e}^{-\mu_{r} y}\mathrm{d}y=\frac{1-\mathrm{e}^{-\mu_{r} s}(\mu_{r} s +1)}{\mu_{r}},   \ \ \ \mu_{r}=\frac{\mu}{r}, \ \ \ \ r> 0
\end{equation}
denotes the mean function of our claims cut at level $s\geq 0$. When there is no ambiguity, we will use $m(s)=m_{\alpha_{0}}(s)$. Notice that $S(s, 0) = 1$, $C_{q}(0)=G(s, 0)=0$, $\forall s \geq 0$, $C_{q}(0+)=\lambda/c_{\alpha_{t_{i}}}$ and that $x \rightarrow C_{q}(x)$, $x\rightarrow G(s, x)$, $x\rightarrow S(s, x)$ are non-decreasing functions, for all $s \geq 0$.\\
Recall the value function in \eqref{eq:function-J}. For ease of notation, we use in this subsection
\begin{equation}\label{eq:notation-J}
J_{t_i,x}:= J(t_i,x,\alpha, a,b)
\end{equation}
to denote the expected discounted dividends and capital injections associated to policies $\alpha, a,b$ consisting
in capital injections with proportionality cost $k>1$, provided that the severity of ruin is smaller than $a >0$ (and declaring bankruptcy for larger severity), paying dividends as soon as the
process reaches some upper level b, and the proportional reinsurance the company picks. In the following proposition we derive an expression for the value function in terms of the function $S$ and $G$ introduced above. 
\begin{Proposition}\label{Prop4.2}
Let $X_s^{t,x,\alpha}$, $t\in [t_i, t_{i+1})$, $t\geq s\geq  0$ be the Cramér-Lundberg process as defined in (\ref{Xalpha}) with jump-sizes being exponentially distributed with mean $1/\mu$. Let $t_{i+1}-t_i=1$, $\forall i \in \mathbb{N}$ and $J$ be as in \eqref{eq:notation-J}. 
We set
\begin{equation}\label{zaj}
    z(a,J_{t_{i},0}) :=\int_{0}^{a}(J_{t_{i},0}-ky)\frac{\mu}{\alpha_{t_{i}}} \mathrm{e}^{-\frac{\mu}{\alpha_{t_{i}}} y}\mathrm{d}y = J_{t_{i},0}(1-\mathrm{e}^{-\frac{\mu}{\alpha_{t_{i}}} a}) - km_{\alpha_{t_{i}}}(a).
\end{equation}
It holds
\begin{enumerate}
    \item The cost function satisfies, for $0 \leq x \leq b$,\\
    \begin{equation}\label{eq:JX}
        J_{t_{i},x}=\frac{W_{q}(t,x)}{W_{q}(t,b)}J_{t_{i},b}+\left(Z_{q}(t,x)-\frac{W_{q}(t,x)}{W_{q}(t,b)}Z_{q}(t,b)\right)z(a,J_{t_{i},0})\,.
    \end{equation}
    In particular, if we set
    \begin{equation}\label{jx}
        j_{x}=J_{t_{i},x}-z(a,J_{t_{i},0})Z_{q}(t,x),
    \end{equation}
    then
    \begin{equation}\label{jxx}
        j_{x}=\frac{W_{q}(t,x)}{W_{q}(t,b)}j_{b}, \ \ \ 0 \leq x \leq b\,.
    \end{equation}
    \item The cost can be explicitly written as
    \begin{equation}
        J_{t_{i},x}=\left\{ \begin{array}{ll}
    0\,, & \mbox{if}\,\,  x< -a\,, \\
        kx+J_{t_{i},0}& \mbox{if}\,\, x\in [-a, 0]\,, \\
        kG(a, x)+J_{t_{i},0}S(a, x)=kG(a, x)+\frac{1-k\partial_{b}G(a, b)}{\partial_{b}S(a, b)}S(a, x)\,, \ \ &\mbox{if}\,\,  x\in (0, b]
        \end{array} \right.
    \end{equation}
    and
    \begin{equation}\label{J0}
        J_{t_{i},0}=\frac{1-k\partial_{b}G(a, b)}{\partial_{b}S(a, b)}=\frac{1-k[m_{\alpha_{t_{i}}}(a)C'_{q}(b)]}{\mathrm{e}^{-\frac{\mu}{\alpha_{t_{i}}} a}C'_{q}(b)+qW_{q}(t,b)}\,.
    \end{equation}
\end{enumerate}
\end{Proposition}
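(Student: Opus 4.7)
The plan is to use the guess-and-verify method based on classical scale-function identities for spectrally negative L\'evy processes. Inside the period $[t_i,t_{i+1})$ the control $\alpha$ is frozen at $\alpha_{t_i}$, so $X^{t,x,\alpha}$ is a spectrally negative Cram\'er--Lundberg process with Laplace exponent \eqref{k}, and the associated scale functions $W_q(t,\cdot)$ and $Z_q(t,\cdot)$ are those of Proposition \ref{Prop4.1}. I will split the derivation by the sign of $x$.

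For $x\in[-a,0]$, the $(\alpha,a,b)$-policy immediately injects $|x|$ units of capital to bring the reserve to $0$ at cost $k|x|$, which gives $J_{t_i,x}=J_{t_i,0}+kx$; for $x<-a$, ruin is declared and $J_{t_i,x}=0$. For $x\in(0,b]$, no dividends nor injections occur until exit from $(0,b)$, so setting $\tau_0^-=\inf\{s\ge t_i:X_s^{t,x,\alpha}<0\}$ and $\tau_b^+=\inf\{s\ge t_i:X_s^{t,x,\alpha}\ge b\}$, the strong Markov property yields
\begin{equation*}
J_{t_i,x}=\mathbb{E}_x\!\left[e^{-q(\tau_b^+-t_i)};\tau_b^+<\tau_0^-\right]J_{t_i,b}+\mathbb{E}_x\!\left[e^{-q(\tau_0^--t_i)}J_{t_i,X_{\tau_0^-}};\tau_0^-<\tau_b^+\right].
\end{equation*}
The two-sided exit identities for spectrally negative L\'evy processes give
$\mathbb{E}_x[e^{-q(\tau_b^+-t_i)};\tau_b^+<\tau_0^-]=W_q(t,x)/W_q(t,b)$ and $\mathbb{E}_x[e^{-q(\tau_0^--t_i)};\tau_0^-<\tau_b^+]=Z_q(t,x)-W_q(t,x)Z_q(t,b)/W_q(t,b)$. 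By the lack-of-memory of $\alpha_{t_i}U\sim\mathrm{Exp}(\mu/\alpha_{t_i})$, the overshoot $-X_{\tau_0^-}$ is $\mathrm{Exp}(\mu/\alpha_{t_i})$-distributed and independent of $\tau_0^-$. Inserting $J_{t_i,-y}=J_{t_i,0}-ky$ on $[0,a]$ and $0$ beyond factorises the second expectation as $[Z_q(t,x)-\tfrac{W_q(t,x)}{W_q(t,b)}Z_q(t,b)]\,z(a,J_{t_i,0})$, with $z(a,J_{t_i,0})$ from \eqref{zaj}. This establishes \eqref{eq:JX}, and \eqref{jx}--\eqref{jxx} follow by inspection.

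For item 2, substituting $Z_q=c_{\alpha_{t_i}}W_q-C_q$ from \eqref{eq:Cq} into \eqref{eq:JX} and collecting terms rewrites the value on $(0,b]$ as $J_{t_i,x}=kG(a,x)+J_{t_i,0}S(a,x)$. The remaining unknown $J_{t_i,0}$ is fixed by the smooth-pasting condition at the barrier: at $x=b$ the HJB system \eqref{HJB} forces $1-\partial_x J_{t_i,x}\big|_{x=b^-}=0$. Differentiating the expression and using $Z_q'=qW_q$ yields $km_{\alpha_{t_i}}(a)C_q'(b)+J_{t_i,0}[e^{-(\mu/\alpha_{t_i})a}C_q'(b)+qW_q(b)]=1$, which solves to \eqref{J0}.

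The main obstacle is to justify the clean factorisation of the overshoot distribution from the first-passage time and the classical two-sided exit identities in our controlled setting. Both hinge on the piece-wise constant nature of $\alpha$: on every interval $[t_i,t_{i+1})$ the controlled process is a genuine spectrally negative L\'evy process with constant parameters, so the Kyprianou-type scale-function identities apply verbatim, and the memoryless property of $\mathrm{Exp}(\mu/\alpha_{t_i})$ gives the desired overshoot independence. A secondary subtlety is justifying the smooth-pasting relation $J_{t_i}'(b)=1$, which one invokes either as a consequence of optimality within the $(\alpha,a,b)$ family or verifies a posteriori via the HJB equation of Theorem \ref{Thvs}.
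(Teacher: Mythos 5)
Your argument for item 1 is essentially the paper's: you apply the strong Markov property at the first exit time $\tau_0^-\wedge\tau_b^+$, invoke the two-sided exit identities for spectrally negative L\'evy processes (the paper cites Kyprianou's Theorem 8.1 for the up-crossing identity), and use the memoryless property of $\alpha_{t_i}U\sim\mathrm{Exp}(\mu/\alpha_{t_i})$ to factorise the deficit at ruin from the discounted exit time. The paper carries out this last step via the Gerber--Shiu resolvent measure, but for exponential jumps this reduces to precisely the factorisation you state, so the two are equivalent. Your observation that the piece-wise constancy of $\alpha$ reduces the problem to a genuine L\'evy process on $[t_i,t_{i+1})$ is also the key enabling remark in the paper.

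For item 2, however, you take a genuinely different route in pinning down $J_{t_i,0}$. The paper derives a second equation by conditioning at the time of the first claim starting from $b$ (using that up to $\tau_1$ the reserve sits at $b$ and dividends accrue at rate $c_{\alpha_{t_i}}$), then eliminates $J_{t_i,b}$; you instead impose the boundary relation $\partial_x J_{t_i,x}\big|_{x=b^-}=1$. The two routes lead to the same formula \eqref{J0}, but your justification of the boundary relation is off. It is \emph{not} a consequence of the HJB system \eqref{HJB} (that only gives $v'\geq 1$ in the dividend region), nor of optimality within the $(\alpha,a,b)$ family. Rather, $J'(b^-)=1$ is the reflecting boundary condition of a barrier dividend strategy, valid for \emph{every} barrier level $b\geq 0$; it encodes that the controlled process is reflected at $b$, not that $b$ is optimal. (It can be proved exactly as the paper does: condition at $\tau_1$ from $b$ to obtain $c_{\alpha_{t_i}}-( \lambda+q)J_{t_i,b}+\lambda\int J_{t_i,b-y}\,dF(y/\alpha_{t_i})=0$, compare with the interior equation, and read off $J'(b^-)=1$.) Calling it ``smooth pasting'' also invites confusion, since smooth pasting in this literature usually refers to the second-order condition characterising the optimal $b^*$. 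If you replace your justification with either the explicit conditioning argument or a citation to the standard reflected-L\'evy result (e.g., $J$ restricted to $(0,b]$ has derivative $1$ at $b$ because the refracted/reflected barrier strategy pays dividends as local time at $b$), the proof closes.

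A last small point: when you say ``substituting $Z_q=c_{\alpha_{t_i}}W_q-C_q$ into \eqref{eq:JX} and collecting terms,'' you are implicitly first eliminating the unknown $j_b$ by evaluating \eqref{jx}--\eqref{jxx} at $x=0$ and using $W_q(t,0)=1/c_{\alpha_{t_i}}$, $Z_q(t,0)=1$. That intermediate step is needed to pass from \eqref{eq:JX} (which involves $J_{t_i,b}$) to the representation $J_{t_i,x}=kG(a,x)+J_{t_i,0}S(a,x)$; it is worth spelling out.
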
     
\begin{proof}
\begin{enumerate}
\item Let $\tau^{x}=\tau^{x}_{0-}\wedge\tau^{x}_{b+}=\inf\{s\geq t_{i}: \, X_{s}^{t_{i},x,\alpha,\pi}<0\}\wedge\inf\{s\geq t_{i}: \, X_{s}^{t_{i},x,\alpha,\pi}>b\}$. By applying the strong Markov property at the stopping time and ~\cite[Theorem 8.1]{kyprianou2014fluctuations}, we get the following statement, for $0\leq x\leq b$,
\begin{align}\label{Jxx}
         J_{t_{i},x}=&\mathbb{E}_{x}\left[\mathrm{e}^{-q(\tau^{x}_{b+}-t_{i})}\mathbb{1}_{\tau^{x}_{b+}<\tau^{x}_{0-}}\right]J_{t_{i},b}+\mathbb{E}_{x}\left[\mathrm{e}^{-q(\tau^{x}_{0-}-t_{i})}\mathbb{1}_{\tau^{x}_{b+}>\tau^{x}_{0-}}\left(J_{t_{i},0}+kX^{t_{i},x,\alpha}_{\tau^{x}_{0-}} \right)\mathbb{1}_{X^{t_{i},x,\alpha}_{\tau^{x}_{0-}}\geq-a}\right]\nonumber\\
            =&\frac{W_{q}(t,x)}{W_{q}(t,b)}(J_{t_{i},b}-I_{b})+I_{x}\,,
\end{align}    
where $I_{y}=\mathbb{E}_{y}\left[\mathrm{e}^{-q(\tau_{0-}^y-t_{i})}\left(J_{t_{i},0}+kX^{t_{i},y,\alpha}_{\tau_{0-}} \right)\mathbb{1}_{X^{t_{i},y,\alpha}_{\tau_{0-}}\geq-a}\right]$ and the random times are understood to be applied
to the controlled reserve starting at $y$. The term $I_{y}$ can be explicitly computed, using the Gerber-Shiu measure, see \cite{kyprianou2013gerber} for more details, 
\begin{align*}
    I_{y}=&\lambda \int_{0}^{a}\frac{\mu}{\alpha_{t_{i}}} \mathrm{e}^{-\frac{\mu}{\alpha_{t_{i}}} u}(J_{t_{i},0}+ku)\,\mathrm{d}u\times \int_{\mathbb{R}_{+}} \left(\mathrm{e}^{-\frac{(\alpha_{t_{i}}\Phi_{q}+\mu)}{\alpha_{t_{i}}}v}W_{q}(t,y)-\mathrm{e}^{-\frac{\mu}{\alpha_{t_{i}}}v}W_{q}(t,y-v)\right)\, \mathrm{d}v\\
    =&\lambda \left\{J_{t_{i},0}(1-\mathrm{e}^{-\frac{\mu}{\alpha_{t_{i}}} a}) - km_{\alpha_{t_{i}}}(a) \right\} \left(\frac{\alpha_{t_{i}} W_{q}(t,y)}{\alpha_{t_{i}}\Phi_{q}+\mu}- \int_{0}^{y} \mathrm{e}^{-\frac{\mu}{\alpha_{t_{i}}}v}W_{q}(t,y-v)\,\mathrm{d}v\right)\,. 
\end{align*}
It follows, using the expression for $W_q$ in \eqref{eq:Wq} and Proposition \ref{Prop4.1}, \ref{item:3}.  
$$I_{y}=\left\{J_{t_{i},0}(1-\mathrm{e}^{-\frac{\mu}{\alpha_{t_{i}}} a}) - km_{\alpha_{t_{i}}}(a) \right\} \left(Z_{q}(t,y)-\frac{q}{\Phi_{q}}W_{q}(t,y)\right)\,.$$
Equation \eqref{eq:JX} follows by plugging the expression of $I_{y}$ into (\ref{Jxx}) and (\ref{jxx}) is a mere consequence of \eqref{eq:JX}.
\item Putting $x = 0$ in (\ref{jx}) yields $J_{t_{i},0}=J_{t_{i},0}(1-\mathrm{e}^{-\frac{\mu}{\alpha_{t_{i}}} a}) - km_{\alpha_{t_{i}}}(a)+\frac{j_{b}}{W_{q}(t,b)}W_{q}(t,0)$. Hence, it holds 
\begin{equation}\label{jb}
    \frac{j_{b}}{W_{q}(t,b)} W_{q}(t,0) =km_{\alpha_{t_{i}}}(a)+J_{t_{i},0} \mathrm{e}^{-\frac{\mu}{\alpha_{t_{i}}} a}=: w(a,J_{t_{i},0}).
\end{equation}
Using \eqref{jb}, the fact that $W_{q}(t,0) =\frac{1}{c_{\alpha_{t_{i}}}}\ne0$, together with (\ref{jxx}), yield for all $x \in [0, b]$
\begin{align}
    J_{t_{i},x} &=z(a,J_{t_{i},0})Z_{q}(t,x)+j_{x}\nonumber\\
            &=z(a,J_{t_{i},0})Z_{q}(t,x)+c_{\alpha_{t_{i}}}w(a,J_{t_{i},0})W_{q}(t,x)\label{Jx146}\\
            &=\left(J_{t_{i},0}(1-\mathrm{e}^{-\frac{\mu}{\alpha_{t_{i}}} a}) - km_{\alpha_{t_{i}}}(a) \right)Z_{q}(t,x)+c_{\alpha_{t_{i}}}\left(J_{t_{i},0} \mathrm{e}^{-\frac{\mu}{\alpha_{t_{i}}} a}+km_{\alpha_{t_{i}}}(a)\right)W_{q}(t,x)\nonumber\\
            &=J_{t_{i},0}S(a, x)+kG(a, x). \label{Jx46}
\end{align}
We use another equation for $J_{t_{i},b}$, obtained by conditioning at the time of the first claim when starting from $b$, we get
\begin{align}\label{eq:Jtb}
J_{t_{i},b}&=\mathbb{E}_{b}\left[\int_{t_{i}}^{\tau_{1}}\mathrm{e}^{-q(t-t_{i})}c_{\alpha_{t_{i}}}dt+\mathrm{e}^{-q(\tau_{1}-t_{i})}\int_{0}^{b+a}J_{t_{i},b-y}\frac{\mu}{\alpha_{t_{i}}} \mathrm{e}^{-\frac{\mu}{\alpha_{t_{i}}} y}\mathrm{d}y\right]\nonumber\\
    &=\frac{c_{\alpha_{t_{i}}}}{\lambda+q}+\frac{\lambda}{\lambda+q}\left(\int_{b}^{b+a}(J_{t_{i},0}+k(b-z))\frac{\mu}{\alpha_{t_{i}}} \mathrm{e}^{-\frac{\mu}{\alpha_{t_{i}}} z}\mathrm{d}z\right.\nonumber\\
    &\qquad +\left. \int_{0}^{b}[z(a,J_{t_{i},0})Z_{q}(t,y)+c_{\alpha_{t_{i}}}w(a,J_{t_{i},0})W_{q}(t,y)]\frac{\mu}{\alpha_{t_{i}}} \mathrm{e}^{-\frac{\mu}{\alpha_{t_{i}}} (b-y)}\mathrm{d}y \right)\nonumber\\
    &=\frac{c_{\alpha_{t_{i}}}}{\lambda+q}+\frac{\lambda}{\lambda+q}\left[J_{t_{i},0}\left(\mathrm{e}^{-\frac{\mu}{\alpha_{t_{i}}} b}-\mathrm{e}^{-\frac{\mu}{\alpha_{t_{i}}} (a+b)}\right)-k\mathrm{e}^{-\frac{\mu}{\alpha_{t_{i}}} b}m_{\alpha_{t_{i}}}(a)\right]\nonumber\\
    &\qquad + \frac{1}{\lambda+q}\left\{z(a,J_{t_{i},0})\left((\lambda+q)Z_{q}(t,b)-c_{\alpha_{t_{i}}}Z_{q}'(t,b_{+})-\lambda \mathrm{e}^{-\frac{\mu}{\alpha_{t_{i}}}b} \right)\right.\nonumber\\
    &\qquad \left.+c_{\alpha_{t_{i}}}w(a,J_{t_{i},0})\left((\lambda+q)W_{q}(t,b)-c_{\alpha_{t_{i}}}W_{q}'(t,b_{+})\right) \right\}\,,
\end{align}
where the second equality follows by observing that $\tau-t_i$ is exponentially distributed with mean $1/\lambda$ and by exploiting \eqref{Jx146} and the third equality follows using the last assertion in Proposition \ref{Prop4.1}. As a consequence, expressing equation \eqref{eq:Jtb} in terms of the function $z$ in \eqref{zaj}, then
recalling that $J_{t_{i},b}=z(a,J_{t_{i},0})Z_{q}(t,b)+c_{\alpha_{t_{i}}}w(a,J_{t_{i},0})W_{q}(t,b)$ and using again the expression for $z$ and for $w$ in \eqref{jb}, we get
\begin{align*}
    &J_{t_{i},0}c_{\alpha_{t_{i}}}\left((1-\mathrm{e}^{-\frac{\mu}{\alpha_{t_{i}}}a})Z_{q}'(t,b_{+})+c_{\alpha_{t_{i}}}\mathrm{e}^{-\frac{\mu}{\alpha_{t_{i}}}a}W_{q}'(t,b_{+})\right)\\
&=c_{\alpha_{t_{i}}}+c_{\alpha_{t_{i}}}km_{\alpha_{t_{i}}}(a)Z_{q}'(t,b_{+})-c_{\alpha_{t_{i}}}^{2}km_{\alpha_{t_{i}}}(a) W_{q}'(t,b_{+})\,,
\end{align*}
or, again, 
\begin{align}\label{eq:expression-dJ}
    c_{\alpha_{t_{i}}}J_{t_{i},0}\partial_{b}S(a, b) &=c_{\alpha_{t_{i}}}[1-km_{\alpha_{t_{i}}}(a)(c_{\alpha_{t_{i}}} W_{q}'(t,b_{+})-Z_{q}'(t,b_{+})]\nonumber\\
    &=c_{\alpha_{t_{i}}}[1-k\partial_{b}G(a, b)]\,.
\end{align}
Equality (\ref{J0}) follows. By plugging \eqref{eq:expression-dJ} into (\ref{Jx46}), we derive the expression \eqref{jxx} and the statements of the proposition are proved.
\end{enumerate}
\end{proof}
\subsection{Derivations concerning the optimal parameters $\alpha^{*}$, $a^{*}$ and $b^{*}$} 
Recall $\phi_q$ and $ \rho$ in \eqref{eq:rho}. In the previous section, we found it convenient to express the cost $J_{t_i,x}$ in terms of the
functions $S$ and $G$. In this section we introduce the following functions $\gamma, \theta: \R_+\times \R \rightarrow \R$ that we use to express the cost $J_{t_i,x}$. This will be convenient for our analysis concerning the optimal parameters $a^*$, $b^*$, and $\alpha^*$.
\begin{equation}
    \gamma(x,\alpha_{t_{i}})=\frac{\Phi_{q}(\alpha_{t_{i}})-\rho_{-}(\alpha_{t_{i}})}{\Phi_{q}(\alpha_{t_{i}})\mathrm{e}^{\Phi_{q}(\alpha_{t_{i}})x}-\rho_{-}(\alpha_{t_{i}})\mathrm{e}^{\rho_{-}(\alpha_{t_{i}})x}}
\end{equation}
and
\begin{equation}
\theta(x,\alpha_{t_{i}})=\frac{\mathrm{e}^{\Phi_{q}(\alpha_{t_{i}})x}-\mathrm{e}^{\rho_{-}(\alpha_{t_{i}})x}}{\Phi_{q}(\alpha_{t_{i}})\mathrm{e}^{\Phi_{q}(\alpha_{t_{i}})x}-\rho_{-}(\alpha_{t_{i}})\mathrm{e}^{\rho_{-}(\alpha_{t_{i}})x}}\,.
\end{equation}
 Using \eqref{J0} and \eqref{eq:Cq}, we express $J$ in terms of the functions $\gamma$ and $\theta$ as follows 
\begin{equation}\label{Jabf}
J(t_i,0,\alpha_{t_{i}}, a,b)=\frac{c_{\alpha_{t_{i}}}\gamma(b,\alpha_{t_{i}})-\lambda km_{\alpha_{t_{i}}}(a)}{q+\frac{\mu q}{\alpha_{t_{i}}} \theta(b,\alpha_{t_{i}})+\lambda \mathrm{e}^{-\frac{\mu}{\alpha_{t_{i}}} a}}.
\end{equation}
Since $\alpha_{t_{i}}$ is fixed in $[\underline{\alpha}, 1]$, for simplicity in the notation, from now on, we use $\alpha$ instead of $\alpha_{t_{i}}$.

\noindent We note first that $J(t_i,0,\alpha, a,\infty)\leq 0,$ and, thus, $b=\infty$ can never be optimal. For this purpose, one notes that $\lim_{x\rightarrow{\infty}}\gamma(x,\alpha)=0$. It follows that either $b^{*} = 0$ or it is a critical point of $(0,\infty)$.Moreover, we note that $J(t_i,0,\underline{\alpha}, a,b)\leq 0,$ and, thus, $\alpha=\underline{\alpha}$ can never be optimal. It follows that either $\alpha^{*} = 1$ or it is a critical point of $(\underline{\alpha},1)$.\\
Before we state our main conclusions, we start with a preparatory result, obtained by differentiating (\ref{J0}) with respect to $a$.
\begin{lemma}\label{akb}
 The partial derivative of $J(t_i,0,\alpha, a,b)$ with respect to $a$ (on $(0,\infty)$) is given by
\begin{equation}
    \frac{\partial }{\partial a}J(t_i,0,\alpha, a,b)=\lambda \frac{\mu}{\alpha}  \mathrm{e}^{-\frac{\mu}{\alpha} a} \frac{-ak(q+q\frac{\mu}{\alpha}\theta(b,\alpha))+c(\alpha)\gamma(b,\alpha)+\lambda k \frac{\mathrm{e}^{-\frac{\mu}{\alpha} a}-1}{\frac{\mu}{\alpha}}}{(q+\frac{\mu q}{\alpha} \theta(b,\alpha)+\lambda \mathrm{e}^{-\frac{\mu}{\alpha} a})^{2}}\,.
\end{equation}
As a consequence,
\begin{enumerate}
    \item picking $a \in \{0,\infty\}$ can never be optimal.
    \item for fixed $k>1$, $\alpha\in [\underline{\alpha},1],$ and $b \geq 0$, there exists a unique critical point $a_{k,b,\alpha}\ne 0$ satisfying $\frac{\partial }{\partial a}J(t_i,0,\alpha, a,b)=0$,
which is equivalent to
\begin{equation}\label{da0}
    -a_{k,b,\alpha}k(q+q\frac{\mu}{\alpha}\theta(b,\alpha))+c(\alpha)\gamma(b,\alpha)+\lambda k \frac{\mathrm{e}^{-\frac{\mu}{\alpha} a_{k,b,\alpha}}-1}{\frac{\mu}{\alpha}}=0.
\end{equation}
Furthermore, (\ref{da0}) implies that at $a_{k,b,\alpha}$, our objective simplifies 
\begin{equation}\label{jofa}
    J(t_i,0,\alpha, a_{k,b,\alpha},b)=ka_{k,b,\alpha}.
\end{equation}
\end{enumerate}
\end{lemma}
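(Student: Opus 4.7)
The plan is to compute $\partial J/\partial a$ directly from \eqref{Jabf} via the quotient rule, recast it in the form stated in the lemma, and then exploit strict monotonicity of the resulting numerator bracket to draw conclusions (i) and (ii); assertion \eqref{jofa} will then drop out by algebraic substitution of the first-order condition. Write $J(t_i,0,\alpha,a,b) = N(a)/D(a)$ with $N(a) = c(\alpha)\gamma(b,\alpha) - \lambda k\, m_{\alpha}(a)$ and $D(a) = q + \tfrac{\mu q}{\alpha}\theta(b,\alpha) + \lambda e^{-(\mu/\alpha) a}$. From the integral representation \eqref{eq:mr} I get $m_{\alpha}'(a) = a\tfrac{\mu}{\alpha} e^{-(\mu/\alpha) a}$ and $D'(a) = -\lambda \tfrac{\mu}{\alpha} e^{-(\mu/\alpha) a}$. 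The quotient rule then gives $\partial J/\partial a = \lambda \tfrac{\mu}{\alpha} e^{-(\mu/\alpha) a}\, h(a)/D(a)^2$ where $h(a) = -a k\, D(a) + c(\alpha)\gamma(b,\alpha) - \lambda k\, m_{\alpha}(a)$. Expanding $D(a)$ and the explicit form of $m_{\alpha}(a)$ from \eqref{eq:mr}, the cross terms $-ak\lambda e^{-(\mu/\alpha) a}$ and $+\lambda k\cdot a\, e^{-(\mu/\alpha) a}$ cancel, leaving exactly the bracket appearing in the statement of the lemma.

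For assertions (i) and (ii), I differentiate the bracket and observe that
\[
h'(a) = -k\Bigl(q + q\tfrac{\mu}{\alpha}\theta(b,\alpha)\Bigr) - \lambda k\, e^{-(\mu/\alpha) a} < 0,
\]
since $q, \theta(b,\alpha), \lambda, k$ are all positive; hence $h$ is strictly decreasing on $[0,\infty)$. At the endpoints $h(0) = c(\alpha)\gamma(b,\alpha) > 0$ (both factors are strictly positive by construction), while $h(a) \to -\infty$ as $a \to \infty$ because the affine term $-akq$ dominates. By the intermediate value theorem $h$ has a unique zero $a_{k,b,\alpha} \in (0,\infty)$, so $\partial J/\partial a$ is strictly positive on $(0,a_{k,b,\alpha})$ and strictly negative on $(a_{k,b,\alpha},\infty)$. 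This simultaneously disqualifies $a = 0$ and $a = \infty$ from optimality (giving (i)) and identifies $a_{k,b,\alpha}$ as the unique interior critical point (giving (ii)).

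To obtain \eqref{jofa} I substitute the critical equation \eqref{da0} into \eqref{Jabf}. Solving \eqref{da0} for $c(\alpha)\gamma(b,\alpha)$ yields $c(\alpha)\gamma(b,\alpha) = a_{k,b,\alpha}k\bigl(q + q\tfrac{\mu}{\alpha}\theta(b,\alpha)\bigr) + \lambda k\,(1 - e^{-(\mu/\alpha) a_{k,b,\alpha}})/(\mu/\alpha)$. Inserting this into the numerator $c(\alpha)\gamma(b,\alpha) - \lambda k\, m_{\alpha}(a_{k,b,\alpha})$ and expanding $m_{\alpha}$ via \eqref{eq:mr}, the two contributions proportional to $\lambda k (1 - e^{-(\mu/\alpha) a_{k,b,\alpha}})/(\mu/\alpha)$ cancel and the residual term is $\lambda k a_{k,b,\alpha} e^{-(\mu/\alpha) a_{k,b,\alpha}}$, so the numerator regroups as $k a_{k,b,\alpha}\bigl(q + q\tfrac{\mu}{\alpha}\theta(b,\alpha) + \lambda e^{-(\mu/\alpha) a_{k,b,\alpha}}\bigr) = k a_{k,b,\alpha}\, D(a_{k,b,\alpha})$. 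Dividing by $D(a_{k,b,\alpha})$ yields \eqref{jofa}. The only conceptual point is the strict monotonicity of $h$ in the middle step; the rest is bookkeeping, and the main obstacle I anticipate is simply keeping the cancellations between the $m_{\alpha}$-contributions and the $D(a)$-contributions transparent.
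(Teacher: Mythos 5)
Your proposal is correct and follows essentially the same route as the paper's proof: differentiate the quotient form \eqref{Jabf}, extract the common factor $\lambda\frac{\mu}{\alpha}e^{-(\mu/\alpha)a}$, and then study the monotone bracket via a sign analysis at $a\to 0^{+}$ and $a\to\infty$ combined with the intermediate value theorem. The one place where you add genuine value is in spelling out the cancellation of the $-ak\lambda e^{-(\mu/\alpha)a}$ and $+\lambda k a e^{-(\mu/\alpha)a}$ terms that makes the bracket $h(a)$ collapse to the form in the lemma, and in carrying out the substitution of \eqref{da0} into \eqref{Jabf} carefully enough to see $N(a_{k,b,\alpha}) = k a_{k,b,\alpha}D(a_{k,b,\alpha})$; the paper states both of these without computation. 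You also correctly identify $\psi_{k,b,\alpha}$ (your $h$) as strictly \emph{decreasing}, whereas the paper's prose erroneously calls it ``increasing'' before computing $\psi_{k,b,\alpha}'<0$, and you correctly take the limit as $a\to+\infty$ rather than the paper's misprinted $a\to-\infty$. So same method, more careful bookkeeping.
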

\noindent Thus, the optimal policy can neither be of Shreve-Lehoczky-Gaver type \cite{shreve1984optimal} (i.e. systematic injection to keep the reserve positive independently of the severity of the ruin), nor of De Finetti type \cite{de1957impostazione}. 
\begin{proof}
\begin{enumerate}
    \item To see that $a = 0$ cannot provide the maximal value for $J^{a,b,\alpha}_{0}$, one notes that 
    $$\lim_{a \rightarrow 0+} \frac{\partial }{\partial a}J(t_i,0,\alpha, a,b)=\frac{\lambda \mu c(\alpha)\gamma(b,\alpha)}{\alpha(q+\frac{\mu q}{\alpha} \theta(b,\alpha)+\lambda)^{2}}>0\,, \ \ \  \forall \, b\geq 0,\ \ \  \alpha \in [\underline{\alpha},1].$$
    Similarly, for every $b\geq 0, \alpha \in [\underline{\alpha},1]$ as $a$ is large enough, $\frac{\partial }{\partial a}J(t_i,0,\alpha, a,b)<0$. Thus proving that the supremum w.r.t $a$ can not be in $\{0,\infty\}$.
    \item Having fixed $k>1, b\geq 0, \alpha \in [\underline{\alpha},1]$, one considers the increasing function 
    $$\mathbb{R}_{+}\ni a\rightarrow\psi_{k,b,\alpha}(a)=-ak(q+q\frac{\mu}{\alpha}\theta(b,\alpha))+c(\alpha)\gamma(b,\alpha)+\lambda k \frac{\mathrm{e}^{-\frac{\mu}{\alpha} a}-1}{\frac{\mu}{\alpha}}.$$ 
    One easily gets $\left(\lim_{a\rightarrow 0+} \psi_{k,b,\alpha}(a)=c(\alpha)\gamma(b,\alpha)\right)\times \left(\lim_{a\rightarrow -\infty} \psi_{k,b,\alpha}(a)<0\right)$, and by the Mean Value Theorem, we conclude that the function $\psi_{k,b,\alpha}$ has at least one root in $(0,\infty)$, and since the function is monotone ($\frac{\partial }{\partial a}\psi_{k,b,\alpha}(a)<0$), so there exists a unique critical point $a_{k,b,\alpha}\ne 0$, satisfying $\frac{\partial }{\partial a}J(t_i,0,\alpha, a,b)=0$.
From \eqref{da0}, \ref{Jabf}, and \eqref{eq:mr}, we get the expression 
$$J(t_i,0,\alpha, a_{k,b,\alpha},b)=ka_{k,b,\alpha}.$$
\end{enumerate}
\end{proof}
\noindent In the following proposition, we draw some conclusions concerning the optimal parameters $a^*$, $b^*$, and $\alpha^*$. 
\begin{Proposition}
    If $(\alpha^*,a^*, b^*)$ realises the maximum of the quantity $J(t_i,0,\alpha, a,b)$ , then it holds
    \begin{enumerate}
        \item either $\alpha^{*}=1$ then we turn to the case that was solved by \cite{avram2021equity}.
        \item  Or $\alpha^{*}\in (\underline{\alpha},1)$, satisfies
        \begin{equation}\label{df0}
        \alpha^*\left(c'(\alpha^*) \gamma(b^*,\alpha^*)+c(\alpha^*) \gamma_{\alpha^*}(b^*,\alpha^*)\right)-\lambda  k m(a^*,\alpha^*)+ka^* \left(\frac{\mu }{\alpha^*} q \theta(b^*,\alpha^*)-\mu q \theta_{\alpha^*}(b^*,\alpha^*)\right)=0.
        \end{equation}
        and we have two cases
    \begin{enumerate}
    \item either $b^{*}= 0$, in which case (\ref{da0}) and (\ref{df0}) implies that $a^{*} , \ \alpha^{*}$ satisfies
    \begin{equation}\label{b0opta}
    -a^*kq+c(\alpha^{*})+\lambda k \frac{\mathrm{e}^{-\frac{\mu}{\alpha^{*}} a^{*}}-1}{\frac{\mu}{\alpha^{*}}}=0,
    \end{equation}
   \begin{equation}\label{b0optph}
    \alpha^{*}c'(\alpha^*)-\lambda  k m(a^{*},\alpha^{*})=0,
    \end{equation}
    \item or $b^{*} \in (0,\infty)$, satisfies
    \begin{equation}\label{bopt}
    c(\alpha^*)\gamma_{b^*}(b^*,\alpha^*)(q+\frac{\mu q}{\alpha^*} \theta(b^*,\alpha^*)+\lambda \mathrm{e}^{-\frac{\mu}{\alpha^*} a^*})-(c(\alpha^*)\gamma(b^*,\alpha^*)-\lambda km(a^*,\alpha^*))\frac{\mu q}{\alpha^*} \theta_{b^*}(b^*,\alpha^*)=0,
    \end{equation}
    \end{enumerate}
    ,
    \begin{equation}\label{aopt}
    a^{*}=\frac{c(\alpha^{*})\gamma_{b}(b^{*},\alpha^{*})}{k q \frac{\mu }{\alpha^{*}} \theta_{b}(b^{*},\alpha^{*})}\,,
    \end{equation}
    and
    \begin{equation}\label{jaopt}
        J(t_{i},0,\alpha^{*}, a^{*},b^{*} )=\frac{c(\alpha^{*})\gamma_{b}(b^{*},\alpha^{*})}{ q \frac{\mu }{\alpha^{*}} \theta_{b}(b^{*},\alpha^{*})}.
    \end{equation}
    In this case, the first eigenvalue of the Hessian matrix of $J(t_i, 0, \cdot, \cdot, \cdot)$ at a critical point $(\alpha, a, b)$ is negative. Hence all critical points are either inflexion or (local) maximum points.
    \end{enumerate}
\end{Proposition}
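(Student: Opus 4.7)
The plan starts from the closed-form expression \eqref{Jabf} for $J(t_i, 0, \alpha, a, b)$ and applies a standard first-order optimality analysis in all three variables $(\alpha, a, b)$. First I would dichotomise on the value of $\alpha^*$: the boundary $\alpha^* = \underline{\alpha}$ is already excluded by the negativity observation made just before Lemma \ref{akb}, and if $\alpha^* = 1$ the proportional-reinsurance control degenerates to full retention, reducing the remaining optimisation over $(a, b)$ to the problem solved in \cite{avram2021equity}. If $\alpha^* \in (\underline{\alpha}, 1)$, then applying the quotient rule to $\partial_\alpha J = 0$ on \eqref{Jabf} and clearing the strictly positive denominator produces exactly \eqref{df0}, while Lemma \ref{akb} already identifies \eqref{da0} as the stationarity condition in $a$.

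Next I would split the interior-$\alpha$ case according to the value of $b^*$. For $b^* = 0$, the defining formulas of $\gamma$ and $\theta$ yield $\gamma(0, \alpha) = 1$ and $\theta(0, \alpha) = 0$ identically in $\alpha$, so $\partial_\alpha \gamma(0, \alpha) = \partial_\alpha \theta(0, \alpha) = 0$; inserting these values into \eqref{da0} and \eqref{df0} collapses them immediately to \eqref{b0opta} and \eqref{b0optph} respectively. For $b^* \in (0, \infty)$, applying the quotient rule to $\partial_b J = 0$ on \eqref{Jabf} produces exactly \eqref{bopt}.

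The derivation of \eqref{aopt} and \eqref{jaopt} is where the structure of the problem really pays off: rather than solving another equation, I would use the identity $J(t_i, 0, \alpha^*, a^*, b^*) = ka^*$ from \eqref{jofa} to rewrite the numerator of \eqref{Jabf} as $ka^*$ times its denominator, and then substitute this relation into \eqref{bopt}. The strictly positive common factor cancels, leaving $c(\alpha^*)\gamma_{b^*}(b^*, \alpha^*) = ka^* \tfrac{\mu q}{\alpha^*}\theta_{b^*}(b^*, \alpha^*)$, which rearranges into \eqref{aopt}; plugging \eqref{aopt} back into $J^* = ka^*$ then yields \eqref{jaopt}.

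For the Hessian statement the key observation is that a single diagonal entry already suffices. I would compute $\partial^2_a J$ at the critical point starting from the expression $\partial_a J = \frac{\lambda\mu}{\alpha}e^{-\mu a/\alpha}\psi_{k,b,\alpha}(a)/D^2$ appearing in Lemma \ref{akb}; since $\psi_{k,b,\alpha}(a^*) = 0$, the second derivative at $a^*$ is proportional to $\psi_{k,b,\alpha}'(a^*)$, which is strictly negative because $\psi_{k,b,\alpha}'(a) = -k(q + q\tfrac{\mu}{\alpha}\theta(b, \alpha)) - \lambda k e^{-\mu a/\alpha}$. Hence the diagonal entry satisfies $e_a^\top H e_a = \partial^2_a J|_{a^*} < 0$, and by the Rayleigh quotient characterisation the smallest eigenvalue of the symmetric Hessian $H$ must be strictly negative, ruling out a local minimum. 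The main obstacle I foresee is algebraic: the derivatives of $\gamma$ and $\theta$ with respect to $\alpha$ are heavy because both $\Phi_q$ and $\rho_-$ depend on $\alpha$ through a square root in \eqref{eq:rho}, so matching the exact printed form of \eqref{df0} will require careful bookkeeping of the dependence through $c_\alpha$, $\Phi_q(\alpha)$, and $\rho_-(\alpha)$ rather than any conceptually new ingredient.
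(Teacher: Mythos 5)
Your proposal is correct and follows essentially the same route as the paper: first-order conditions in $\alpha$, $a$, and $b$ on the closed form \eqref{Jabf}, with the dichotomy on $\alpha^*$ and then on $b^*$, and $\partial^2_a J<0$ at the critical point for the Hessian claim. Your explicit use of $\gamma(0,\alpha)=1$, $\theta(0,\alpha)=0$ (so the $\alpha$-derivatives at $b=0$ vanish) to collapse \eqref{da0} and \eqref{df0} into \eqref{b0opta}--\eqref{b0optph}, the substitution of $J^*=ka^*$ from \eqref{jofa} into \eqref{bopt} to cancel the denominator and obtain \eqref{aopt}, and the Rayleigh-quotient phrasing of the final eigenvalue step all make explicit what the paper leaves terse, but the underlying analysis coincides with the paper's.
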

\begin{proof}
\begin{enumerate}
    \item This case is studied in \cite{avram2021equity}.
    \item The partial derivative of $J(t_i,0,\alpha, a,b)$ with respect to $\alpha$ (on $(\underline{\alpha}, 1)$) is given by
    \begin{align*}
    \frac{\partial }{\partial \alpha}J(t_i,0,\alpha, a,b)&=\frac{\left(c'(\alpha) \gamma(b,\alpha)+c(\alpha) \gamma_{\alpha}(b,\alpha)-\frac{\lambda  k}{\alpha} \left(m(a,\alpha)-\frac{\mu  a^{2}}{\alpha}\mathrm{e}^{-\frac{\mu  a}{\alpha}}\right)\right)\left(q+\frac{\mu  q \theta(b,\alpha)}{\alpha}+\lambda   \mathrm{e}^{-\frac{\mu  a}{\alpha}}\right)}{\left(q+\frac{\mu  q \theta(b,\alpha)}{\alpha}+\lambda   \mathrm{e}^{-\frac{\mu  a}{\alpha}}\right)^{2}},\\
    &-\frac{\left(c(\alpha) \gamma(b,\alpha)-\lambda  k m(a,\alpha)\right) \left(-\frac{\mu }{\alpha^{2}} q \theta(b,\alpha)+\frac{\mu }{\alpha} q \theta_{\alpha}(b,\alpha)+\lambda \frac{ \mu }{\alpha^{2}} a  \mathrm{e}^{-\frac{\mu  a}{\alpha}}\right)}{\left(q+\frac{\mu  q \theta(b,\alpha)}{\alpha}+\lambda   \mathrm{e}^{-\frac{\mu  a}{\alpha}}\right)^{2}}=0,
    \end{align*}
    with the first-order condition for $\alpha$ we obtain (\ref{df0}).
    \begin{enumerate}
        \item The assertions (\ref{b0opta}), and (\ref{b0optph}) in the first case $b^* = 0$ is a direct consequence of equation (\ref{df0}), and Lemma(\ref{akb}).
        \item In the case when $b^* \in (0, \infty)$, the $(\alpha^*,a^*, b^*)$ should be a critical point. The partial derivative of $J(t_i,0,\alpha, a,b)$ with respect to $b$ (on $(0, \infty)$) is given by
        \begin{equation*}
        \frac{\partial }{\partial b}J(t_i,0,\alpha, a,b)=\frac{c(\alpha)\gamma_{b}(b,\alpha)(q+\frac{\mu q}{\alpha} \theta(b,\alpha)+\lambda \mathrm{e}^{-\frac{\mu}{\alpha} a})-(c(\alpha)\gamma(b,\alpha)-\lambda km(a,\alpha))\frac{\mu q}{\alpha} \theta_{b}(b,\alpha)}{(q+\frac{\mu q}{\alpha} \theta(b,\alpha)+\lambda \mathrm{e}^{-\frac{\mu}{\alpha} a})^{2}}.
        \end{equation*}
     We write the first-order condition for b ($\frac{\partial }{\partial b}J(t_i,0,\alpha, a,b)=0$), for getting (\ref{bopt}).
    From (\ref{bopt}) and (\ref{da0}) we get (\ref{aopt}).
    Substituting (\ref{aopt}) into \ref{jofa}, we obtain (\ref{jaopt}).
Moreover, for the Hessian matrix at a critical point $(\alpha, a, b)$ we have
\begin{equation*}\label{d2/a2,b2}
    \frac{\partial^{2} }{\partial a^{2}}J(t_i,0,\alpha, a,b)<0, \qquad \quad \frac{\partial^{2} }{\partial b^{2}}J(t_i,0,\alpha, a,b)\leq 0,
\end{equation*}
\begin{equation*}\label{d2/ab}
    \frac{\partial^{2} }{\partial b \partial a}J(t_i,0,\alpha, a,b)=\frac{\partial^{2} }{\partial \alpha \partial a}J(t_i,0,\alpha, a,b)=\frac{\partial^{2} }{ \partial a \partial b}J(t_i,0,\alpha, a,b)=\frac{\partial^{2} }{ \partial a \partial \alpha}J(t_i,0,\alpha, a,b)=0\,,
\end{equation*}
\begin{align*}
    \frac{\partial^{2} }{ \partial \alpha \partial b}J(t_i,0,\alpha, a,b)&=\frac{\partial^{2} }{ \partial b \partial \alpha}J(t_i,0,\alpha, a,b)\\
    &=\frac{1}{q+\frac{\mu  q \theta(b,\alpha)}{\alpha}+\lambda   \mathrm{e}^{-\frac{\mu  a}{\alpha}}}\left[ c' \gamma_{b}+c(\alpha)\gamma_{\alpha b}+\frac{c(\alpha^{*})\gamma_{b}(b^{*},\alpha^{*})}{ \theta_{b}(b^{*},\alpha^{*})}\left( \frac{\theta_{b}}{\alpha}-\theta_{\alpha b}\right) \right]\,,\\
    \frac{\partial^{2} }{\partial \alpha^{2}}J(t_i,0,\alpha, a,b)&=\frac{1}{q+\frac{\mu  q \theta(b,\alpha)}{\alpha}+\lambda\mathrm{e}^{-\frac{\mu  a}{\alpha}}}\left[2c'\gamma_{\alpha}+c(\alpha)\gamma_{\alpha\alpha}\right.\\
    &\left.\qquad \quad -\frac{c(\alpha)\gamma_{b}}{\theta_{b}}\left(\frac{2}{\alpha^{2}}\theta-\frac{2}{\alpha}\theta_{\alpha}+\theta_{\alpha\alpha}\right)+\lambda  k \frac{\mu}{\alpha^{3}} \left(\frac{c(\alpha^{*})\gamma_{b}(b^{*},\alpha^{*})}{k q \frac{\mu }{\alpha^{*}} \theta_{b}(b^{*},\alpha^{*})}\right)^{2} \mathrm{e}^{-\frac{\mu}{\alpha} a} \right]\,
\end{align*}
and hence the statement follows.
 \end{enumerate}
\end{enumerate}
\end{proof}

\section{Numerical study}\label{sec:num}
In this section, we calculate the solution of the HJB equation (\ref{HJB1}) by using a policy iteration algorithm named Howard algorithm (\cite{howard1960dynamic}). We solve it for a fixed $a$ and we use finite difference approximations to discretise equation (\ref{HJB1}) as it is detailed below.
\subsection{Finite-difference scheme}
Let $N \in \mathbb{N}$ and $\delta>0$. We partition the space domain $D_\delta \subset (0,\infty)$ using a mesh $x_{\delta}=(x_{j})_{j=1,2,...,N}$, such that $x_{j}=j\delta$. The HJB (\ref{HJB1}) is discretised by replacing the first derivatives of $v$ with the following approximations
\begin{align}
v'(t, x_{j})&=\frac{v(t, x_{j}+\delta)-v(t, x_{j})}{\delta}, \quad c(\alpha)\geq 0\,,\nonumber\\
v'(t,x_{j})&=\frac{v(t, x_{j})-v(t, x_{j}-\delta)}{\delta}, \quad    \mbox{other ways}\,, \quad t\geq 0, \quad j=1,2,\ldots, N\,,\nonumber\\
v(t,0)&=ka\,, \nonumber\\  
v(t,x)&=kx+ka\,,   \quad \qquad \qquad \qquad   \mbox{if}\quad  x \in (-a, 0)\,, \nonumber \\ 
v(t,x)&=0\,,    \qquad  \qquad \qquad \qquad \qquad \mbox{if}\quad     x \in (-\infty, -a]\,.
\end{align}
This finite difference approximation leads to a system of $N$ equations with $N$ unknowns $(v_{\delta}(x_{j}))_{j=1,2,...,N}$ given as a solution to 
$$\max\{H^{\alpha}_{\delta}v_{\delta}, \ \varrho-Bv_{\delta}\}  =0,$$
where $H^{\alpha}_{\delta}$ is the matrix associated to the approximation of the operator $H(t_{i},x,v,v'(t_{i},x))$, $B=(B^{ij})_{1\leq i,j\leq N}$ is the matrix associated to the second term of our equation, defined as 
$$\left\{ \begin{array}{cl}
B^{ii}=1, \\
B^{(i-1)i}=-1,  \\
B^{ji}=0\,, &\mbox{for all } j\notin \{i , i-1\}\,,
\end{array} \right.$$
and $\varrho= (\varrho^i)_{1\leq i\leq N}$ is the vector which all entries equal $1$ except $\varrho^1=1+ak/\delta$. In our numerical example, for simplicity we fix $a$ and we derive the values of $\alpha^*$ and $b^*$.\\
{\it The Howard algorithm.}
To solve (~\ref{HJB1}), we use the Howard  algorithm, also named policy iteration. It consists of computing two sequences $\{\alpha^{n},D_{\delta,1}^{n},D_{\delta,2}^{n}\}_{n\geq 1}$ and $(v^{n}_{\delta})_{n\geq 1}$, (starting from $v^{1}_{\delta}$) defined by
\begin{enumerate}
    \item {\it Step 2n-1.} Given $v^{n}_{\delta}$, compute a feedback strategy $\alpha^{n} $ defined as
    $$\alpha^{n} =\mbox{argmax}_{\alpha \in [\underline{\alpha}, 1] }\{H^{\alpha}_{\delta}v^{n}_{\delta} \}\,.$$
    Define a new partition ($D_{\delta,1}^{n+1},D_{\delta,2}^{n+1}$) of $D_{\delta}$ as follows: compare the values of $H^{\alpha}_{\delta}v^{n}_{\delta}$ and $\varrho-Bv^{n}_{\delta} $ at all points of the grid. Define $D_{\delta,1}^{n+1}$ as the set of points of $D_{\delta}$ where  $H^{\alpha}_{\delta}v^{n}_{\delta}\geq b-Bv^{n}_{\delta}$ and $D_{\delta,2}^{n+1}= D_{\delta}\setminus D_{\delta,1}^{n+1}$.
    \item {\it Step 2n.} Define $v^{n+1}_{\delta}$ has the solution of the linear systems:
    $$H^{\alpha^{n}}_{\delta}v^{n+1}_{\delta}=0 \mbox{ in } D_{\delta,1}^{n},$$ 
    and
    $$\varrho-Bv^{n+1}_{\delta}=0 \mbox{ in }  D_{\delta,2}^{n}.$$
    \item If $|v^{n+1}_{\delta}-v^{n}_{\delta}|,<\varepsilon$ stop, otherwise, go to step 2n + 1.
\end{enumerate}
The sequence $(v^{n}_{\delta})_{n\geq 1}$ converges to the solution $v_{\delta}$ of ~\ref{HJB1} and the sequence of partition $\{D_{\delta,1}^{n},D_{\delta,2}^{n}\}_{n\geq 1}$ converges towards the optimal partition $\{D_{\delta,1},D_{\delta,2}\}$ of $D_{\delta}$, since the matrices $H^{\alpha^{n}}_{\delta}$
are diagonally dominant (see \cite{chancelier2007policy}).

\subsection{Numerical results}
The HJB equation (\ref{HJB1}) has been solved by using the Howard algorithm. This algorithm is very efficient and converges fastly to the solution.
The optimal policy has the following form: When the surplus process is in $(-a; 0)$, the insurer injects the reserve. When the surplus process is in $(-a; b)$, the insurer does not distribute any dividend, and only pay dividends when the reserve above some critical threshold $b$, as mentioned in the following figures.
In the following, we use the model parameters given by $U_{i}\sim U_{[b_{min}, b_{max}}]$  distributed claim amounts. The insurer’s premium rate is determined via the expected value principle and reads $ c=(1+\eta_{1})\lambda\mathbb{E}[U]=(1+\eta_{1})\lambda\mu.$
In the following table, we add the value of the parameters that we use for our model.
\begin{table}[h]
    \centering
    \begin{tabular}{|c|c|c|c|c|c|c|c|c|c|c|c|} 
    \hline
    The parameter & N & $\delta$ & $\eta_1$ & $\eta_2$ & $q$ & $k$ & $a$ & $\lambda$& $b_{min}$& $b_{max}$&$\mu$\\ \hline
    The value & 300 & 0.009 & 0.1 & 0.11 & 0.15 & 1.14 & 0.85 & 4 &0.1&0.9&0.5 \\ \hline
    \end{tabular}
    \label{tab1}
\end{table}

\noindent Figure \ref{ORS} illustrates a positive correlation between the optimal reinsurance strategy $\alpha^{*}$ and the capital. This means that, by increasing the company's capital, its ability to absorb risks increases.
\begin{figure}[H]
\centering
\includegraphics[width=0.9\linewidth]{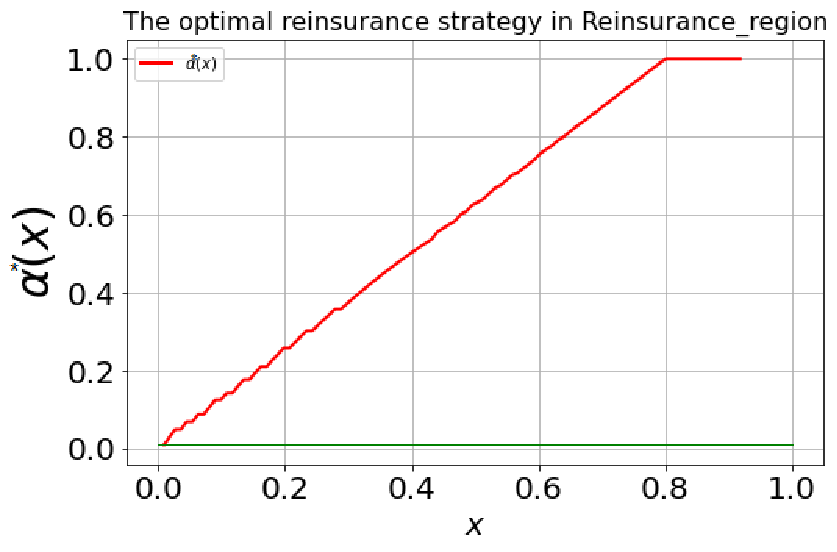}
\caption{The optimal reinsurance strategy $\alpha^{*}$}
	\label{ORS}
\end{figure}
\noindent We notice in Figure ~\ref{OVF} that the optimal value function increases with increasing capital and that there is a corner point when $x=0$, (see Figure ~\ref{OVF(OV)}). This explains that the value function is not differentiable at $x=0$.
\begin{figure}[H]
\centering
\includegraphics[width=0.9\linewidth]{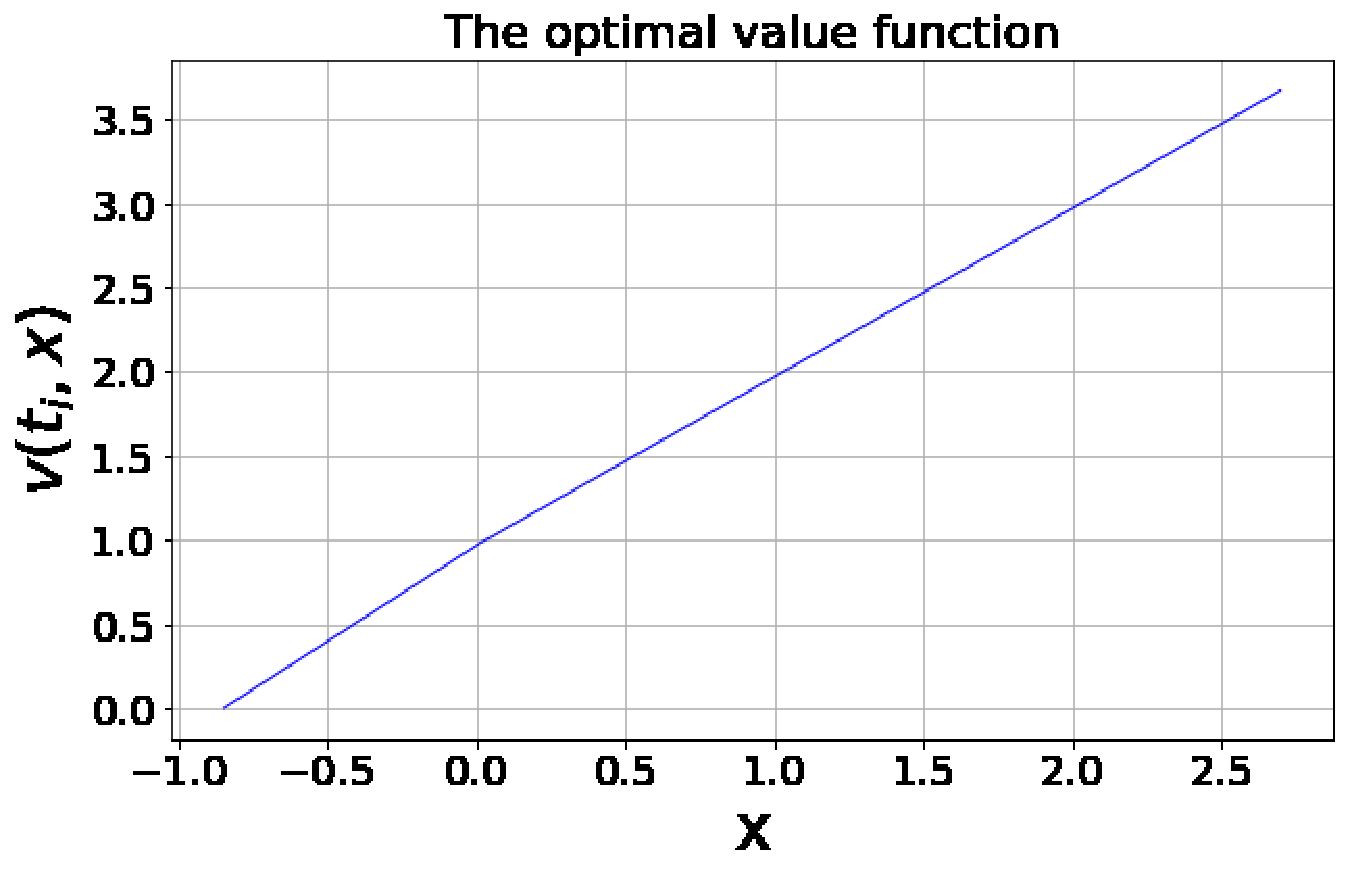}
\caption{The optimal value function}
	\label{OVF}
\end{figure}
\begin{figure}[H]
\centering
\includegraphics[width=0.9\linewidth]{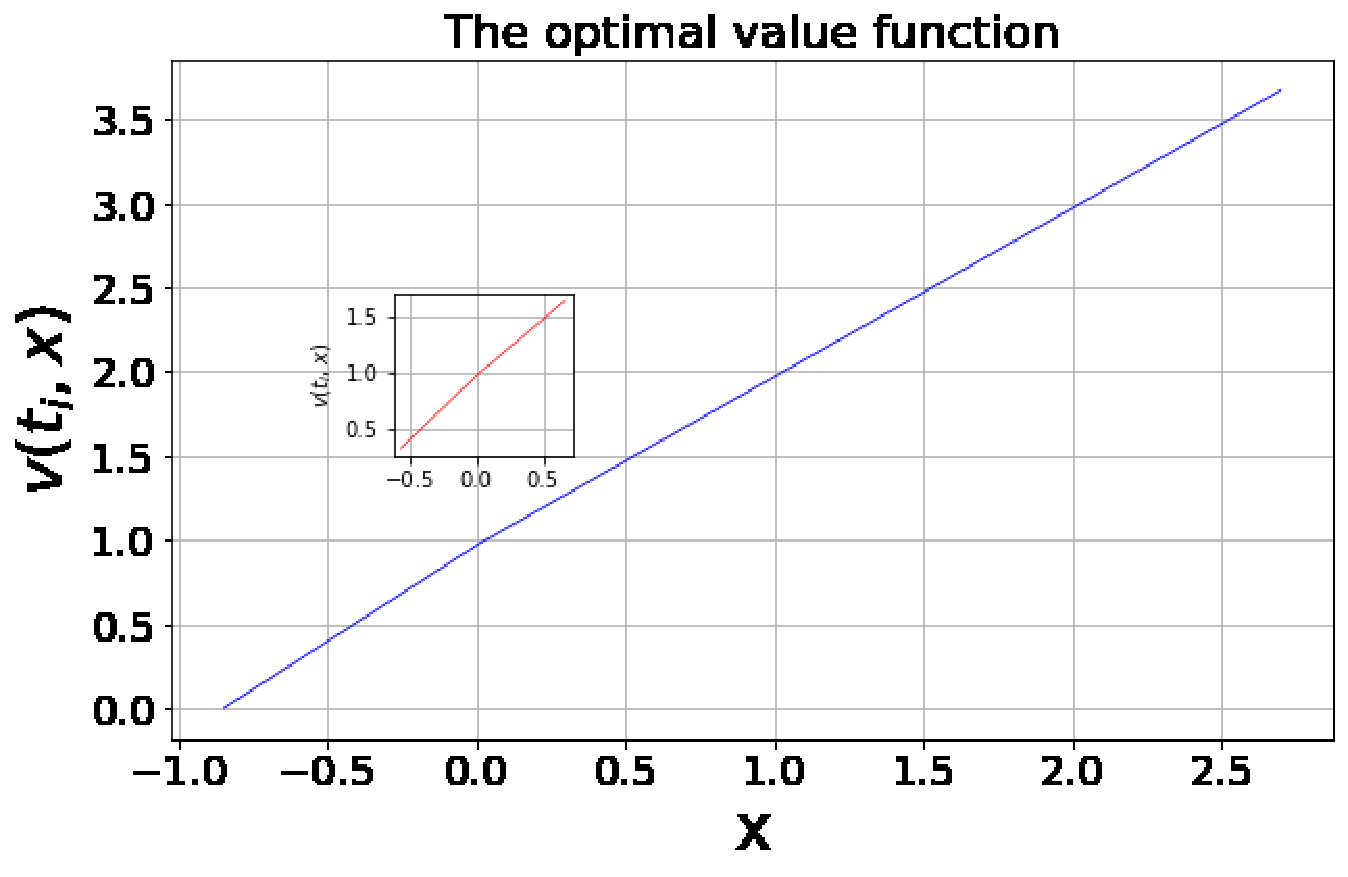}
\caption{The optimal value function (value function at x=0)}
	\label{OVF(OV)}
\end{figure}
\noindent Figure~\ref{OVFRS} illustrates both the optimal value function and the optimal reinsurance strategy $\alpha^{*}$. It is divided into three regions: capital injection, reinsurance, and dividends. The blue line represents the optimal value function, the red line represents the optimal reinsurance strategy $\alpha^{*}$, which begins from $\underline{\alpha}>0$, increases linearly in the reinsurance regions until the reserve reaches a critical threshold $b^*=0.8$. 
\begin{figure}[H]
\centering
\includegraphics[width=0.8\linewidth]{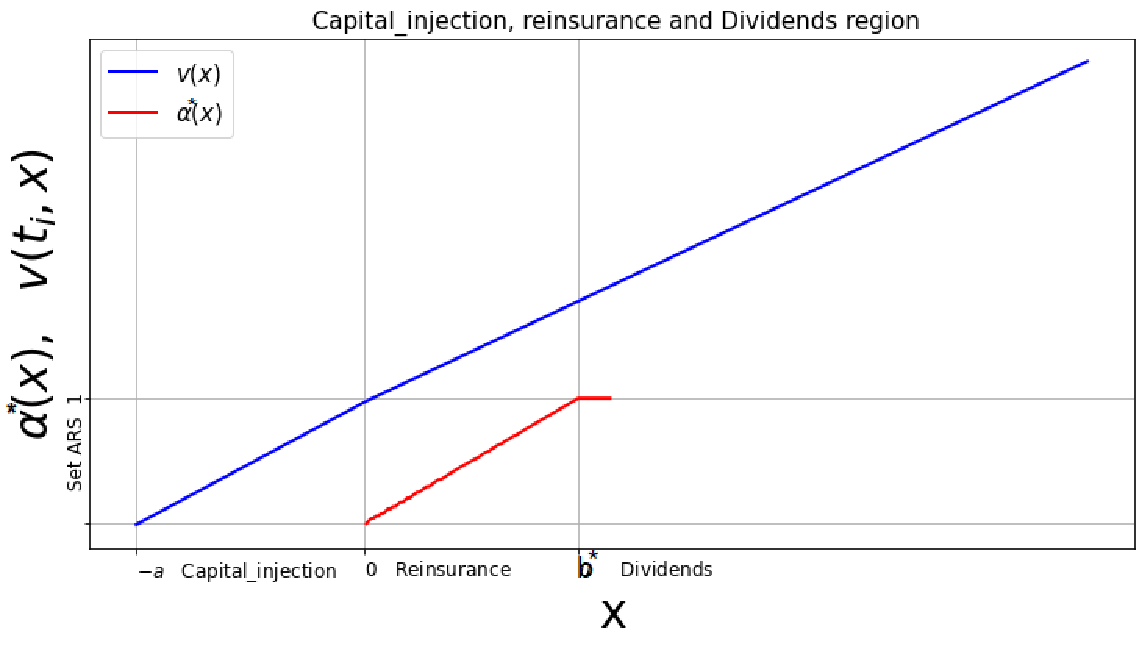}
\caption{The optimal value function and reinsurance}
	\label{OVFRS}
\end{figure}

\noindent Figure~\ref{SPWR} shows the effect of the reinsurance process on the insurance company's surplus process, as we note that the slope and the size of claims when there is no reinsurance $\alpha=1$, are greater than the case where the reinsurance is given by $\alpha^*(X_{.}^{t,x,\alpha,\pi})$.
\begin{figure}[H]
\centering

\includegraphics[width=0.7\linewidth]{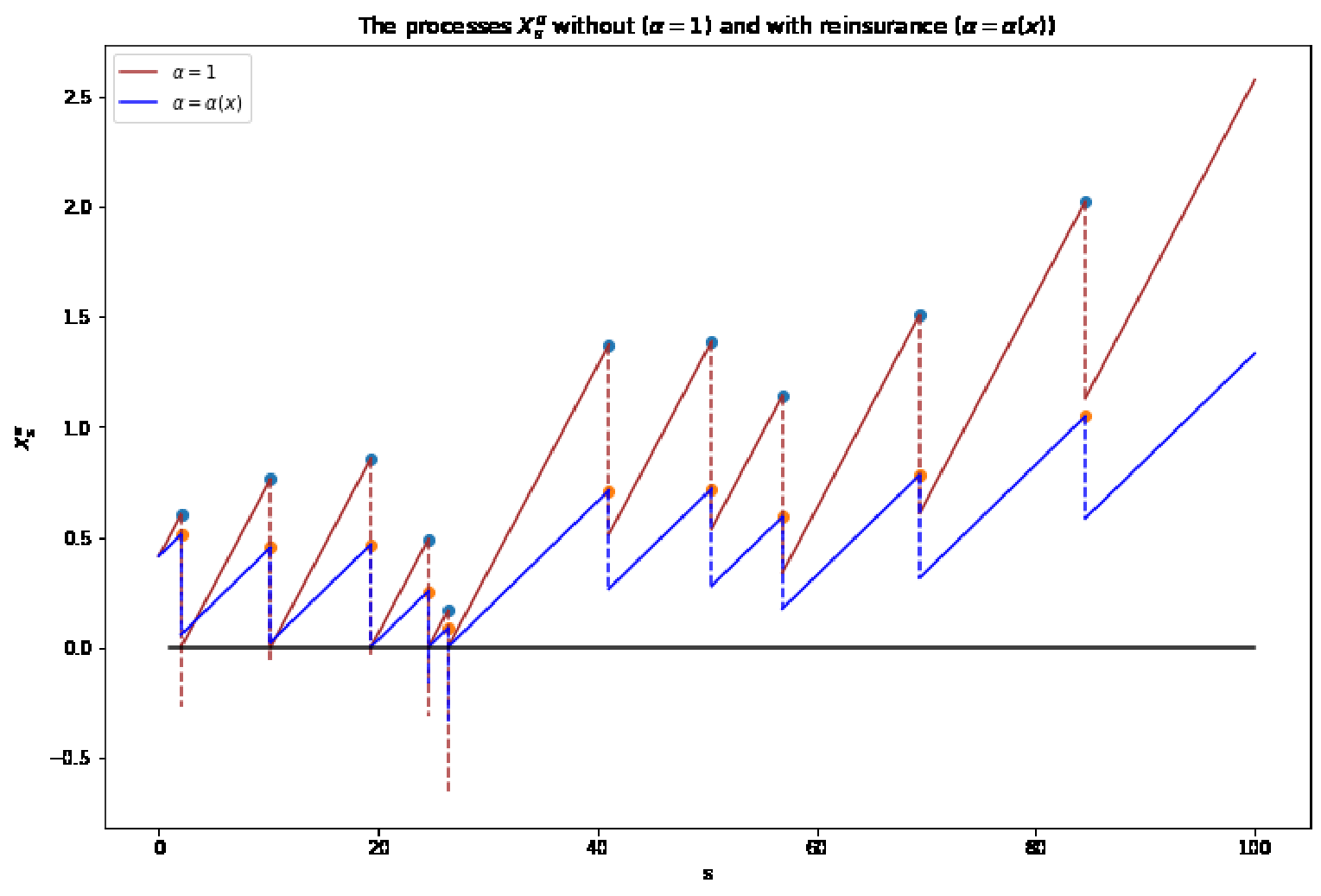}
\caption{surplus process with and without reinsurance strategy}
	\label{SPWR}
\end{figure}
\noindent We recall that we consider a fixed reinsurance rate, that is, on each interval $[t_i, t_{i+1})$, we have $\alpha^*t = \alpha{t_i}$.
Figures~\ref{SPWR112} and~\ref{SPWR136} illustrate the impact of the frequency of changes in the reinsurance strategy on the insurance company's surplus process. We observe that the surplus increases more rapidly as the reinsurance strategy is updated more frequently. Specifically, Figure~\ref{SPWR112} shows the evolution of the surplus when the company adjusts its reinsurance strategy every 1 and 12 time units, while Figure~\ref{SPWR136} depicts the corresponding results for updates every 1, 3, and 6 time units.

\begin{figure}[H]
\centering

\includegraphics[width=0.9\linewidth]{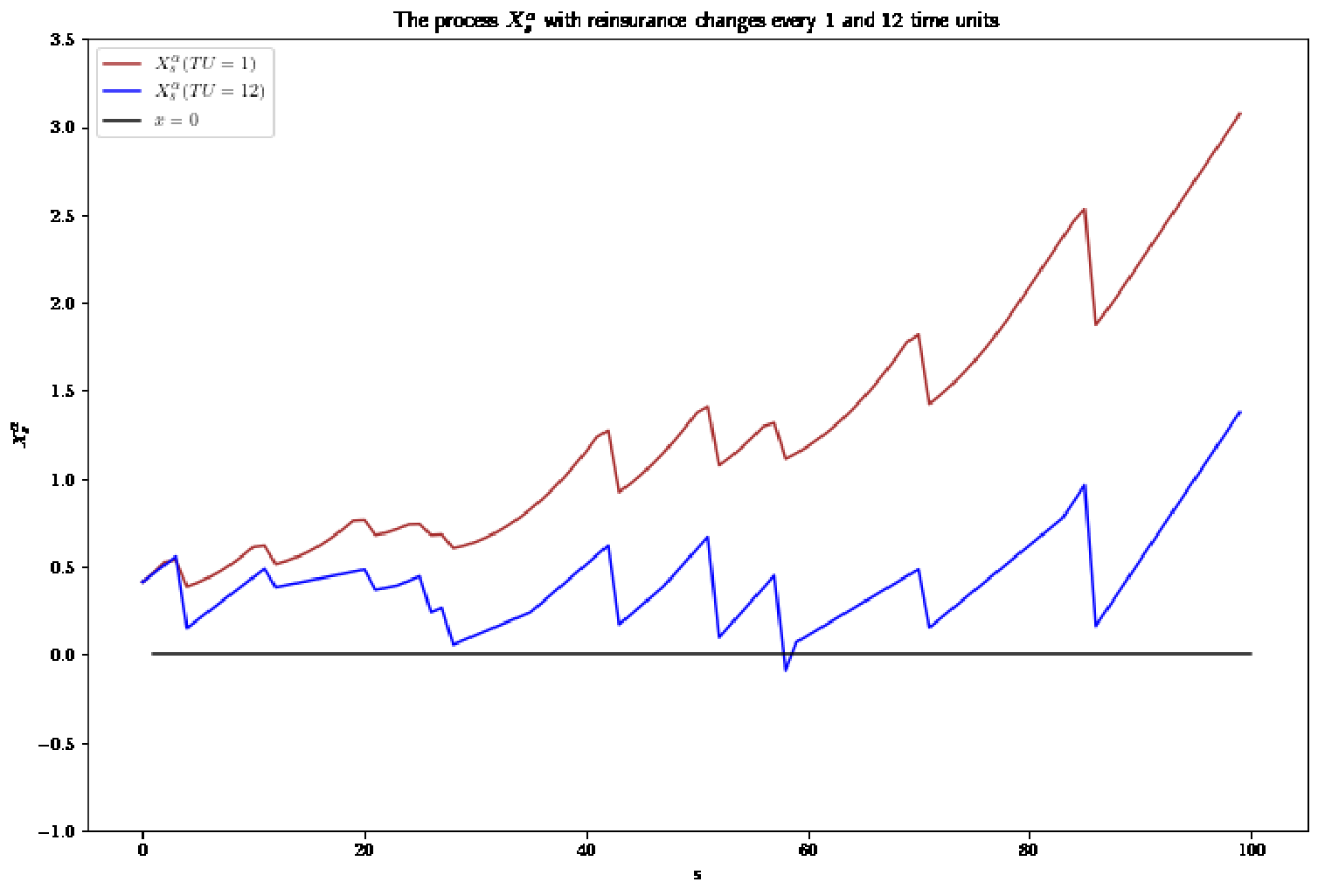}

\caption{The surplus process with Reinsurance (changes every 1 and 12 time units)}
	\label{SPWR112}
\end{figure}

\begin{figure}[H]
\centering

\includegraphics[width=0.9\linewidth]{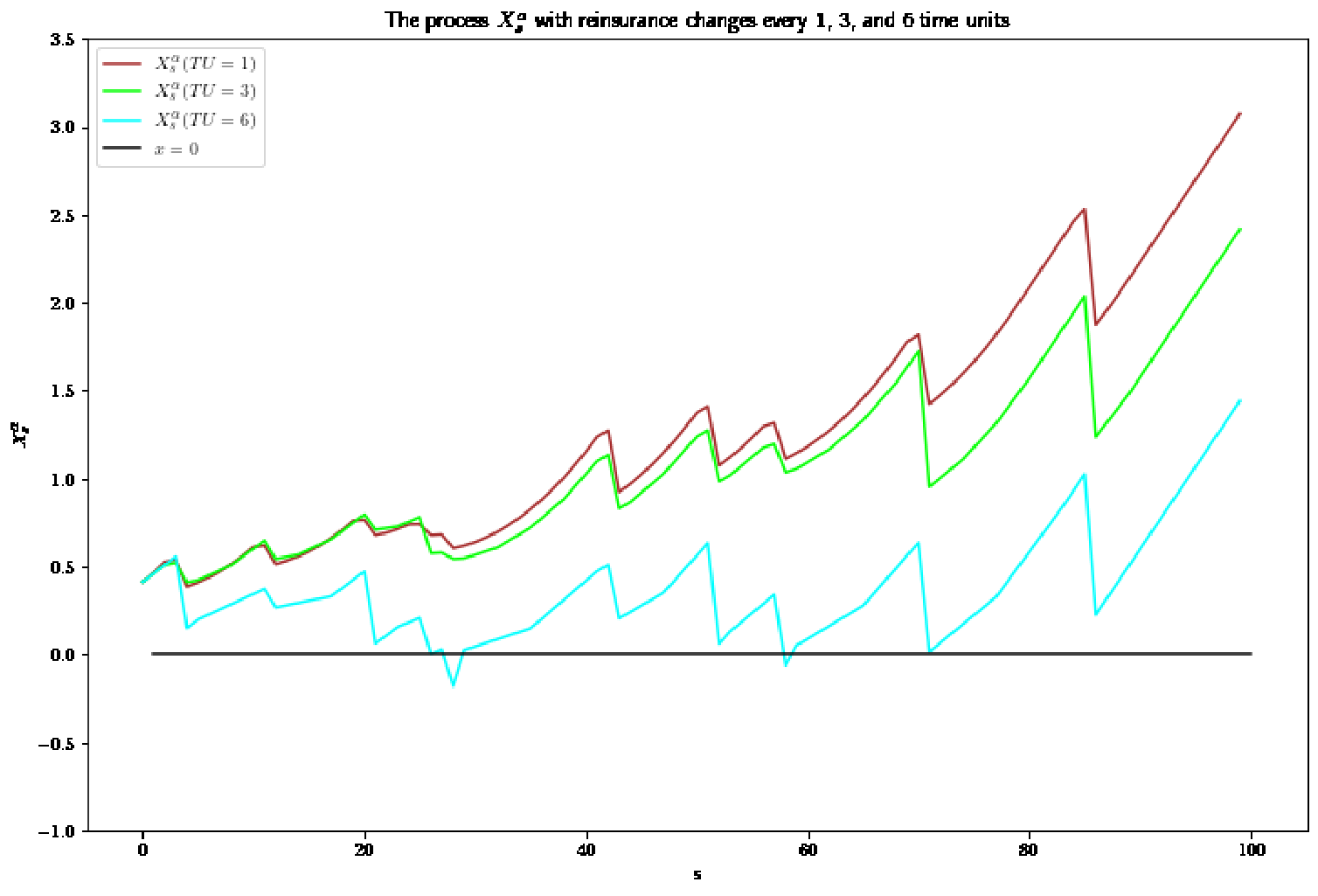}

\caption{The surplus process with Reinsurance (changes every 1, 3 and 6 time units)}
	\label{SPWR136}
\end{figure}
\noindent Figure~\ref{ORST} illustrates the optimal reinsurance strategy $\alpha^{*}_{t}$ over a span of 100 time units $[t_{i}, t_{i+1}),\ \ i=0,1,2,...,99$, presented as a function of time. In this context, the insurance company decides the level of retention at the onset of each time unit by specifying the $\alpha^*_{t_{i}}$ value at the beginning of the period $[t_{i}, t_{i+1})$, and $\alpha^*_{t}=\alpha^*_{t_i}$ during this period. We recall that the red line represents the lowest admissible retention.

\begin{figure}[H]
\centering

\includegraphics[width=0.8\linewidth]{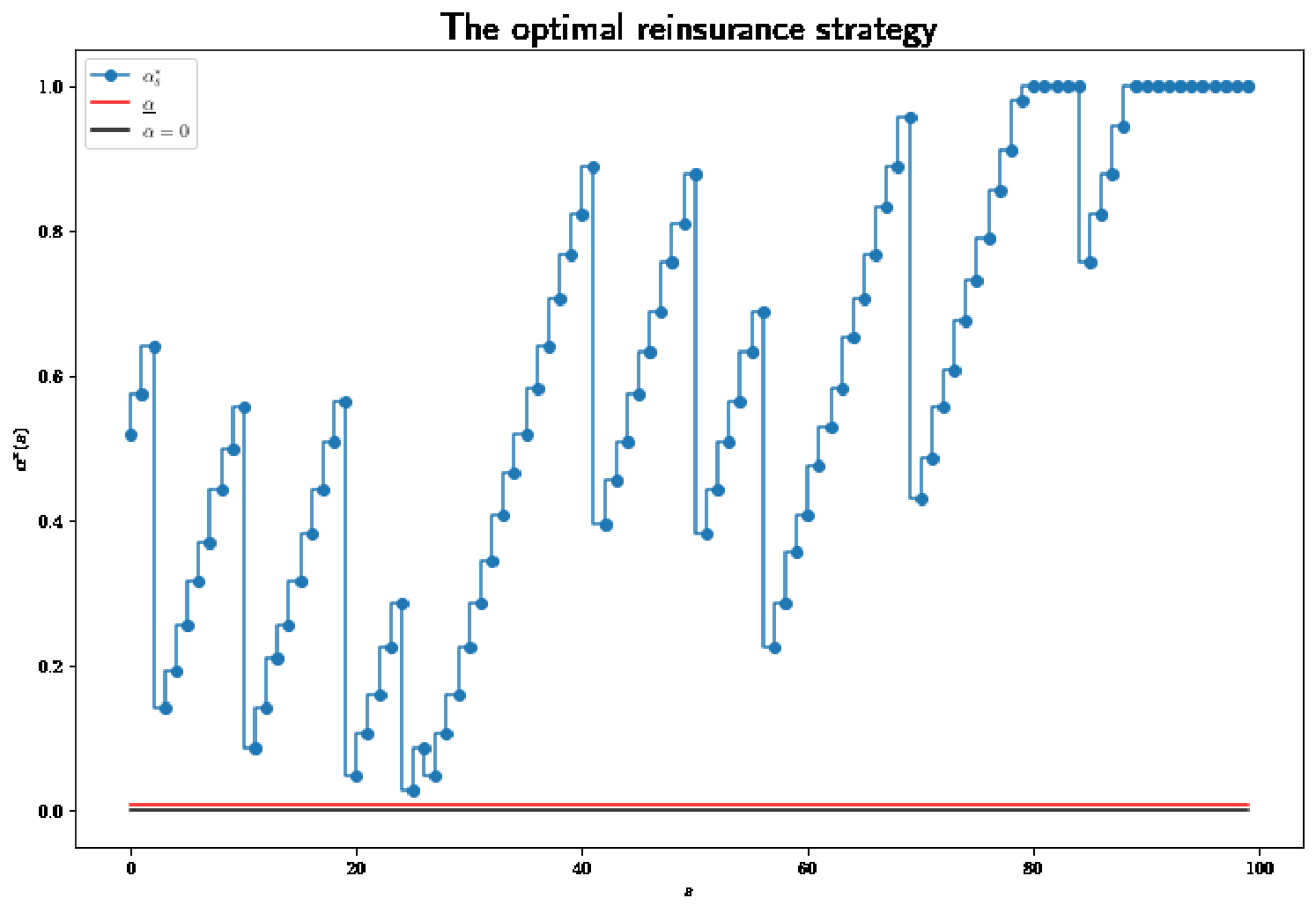}
\caption{The optimal reinsurance strategy $\alpha^{*}$ (changes every time unit)}
	\label{ORST}
\end{figure}

\begin{figure}[H]
\centering

\includegraphics[width=0.8\linewidth]{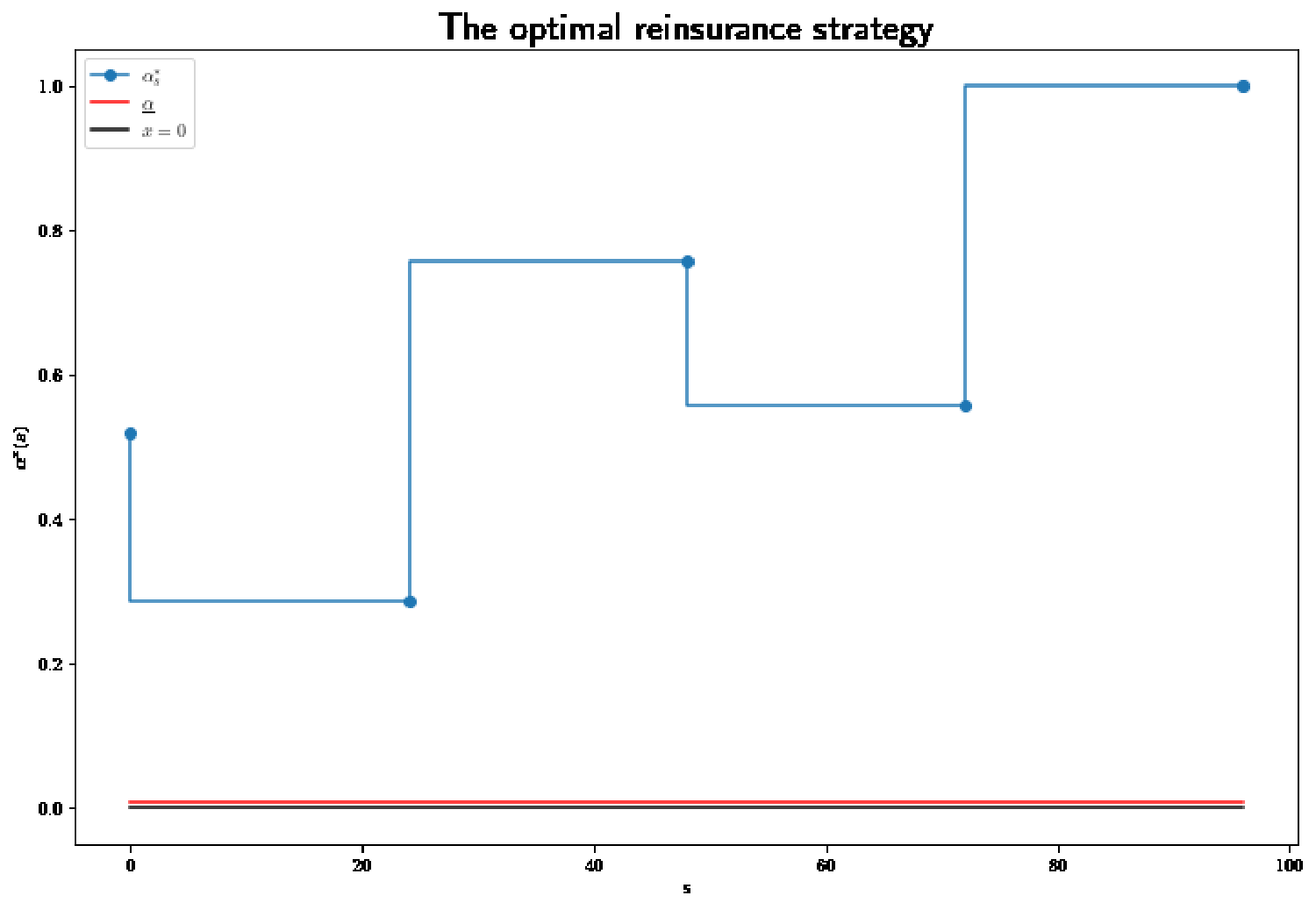}
\caption{The optimal reinsurance strategy $\alpha^{*}$ (changes every 12 time units)}
	\label{ORST12}
\end{figure}

\section{Conclusion} \label{sec:conc}
In this paper, we studied a dynamic optimal reinsurance problem which includes a stochastic process representing the reinsurance in the Cram\'er-Lundberg model. The aim was to determine the optimal limit for reducing the risks that the insurance company can bear. 
To solve our problem we used two approaches.
In the first approach, we proved that the value function maximising the difference between the expected discounted dividends and the injection of capitals of the insurance company is a solution to an HJB system in the viscosity sense.
We proved the uniqueness of the value function on $(-a,0)\cup(0,\infty)$. 
In the second approach, we used the L\'evy property of the state process in the interval $[t_i, t_{i+1})$ for all integer $i$. We get interesting relations between $\alpha^*$, $a^*$, and $b^*$. 
Finally, in our a numerical analysis, we used finite-difference method to solve the HJB equation by fixing first $a$ and solving for the optimal $\alpha$ and $b$.
Notice that the first approach allows for more general Markovian models to describe the claims and we do not have to restrict to L\'evy type as in the second approach.

\section*{Acknowledgement}
The authors thank the anonymous reviewer for their valuable comments and constructive suggestions, which have improved the quality of this paper.

\bibliographystyle{plain}
\bibliography{refrance.bib}
\end{document}